\numberwithin{equation}{section} 
\numberwithin{figure}{section} 
\theoremstyle{plain}
\theoremstyle{plain}
\newtheorem{thm}{Theorem}
  \theoremstyle{plain}
  \newtheorem{prop}[thm]{Proposition}
  \theoremstyle{plain}
  \newtheorem{lem}[thm]{Lemma}
\numberwithin{thm}{section}
\begin{document}

\title{The Number of Extremal Components of a Rigid Measure}

\thanks{C. A. was supported in part by an NSF REU program at Indiana University.
H. B. was supported in part by a grant from the National Science Foundation.}

\author{C. Angiuli and H. Bercovici}

\address{Department of Mathematics, Indiana University, Bloomington, IN 47405}

\email{bercovic@indiana.edu, cangiuli@indiana.edu}

\maketitle
\markboth{}{} 
\begin{abstract}
The Littlewood-Richardson rule can be expressed in terms of measures,
and the fact that the Littlewood-Richardson coefficient is one amounts
to a rigidity property of some measure. We show that the number of
extremal components of such a rigid measure can be related to easily
calculated geometric data. We recover, in particular, a characterization
of those extremal measures whose (appropriately defined) duals are
extremal as well. This result is instrumental in writing explicit
solutions of Schubert intersection problems in the rigid case.
\end{abstract}

\section{Introduction\label{sec:Introduction}}

Our main object of study is a special class of measures in the plane,
which we now define. Start with three unit vectors $w_{1},w_{2},w_{3}$
in $\mathbb{R}^{2}$ such that $w_{1}+w_{2}+w_{3}=0$.\begin{center}\begin{picture}(0,0)%
\includegraphics{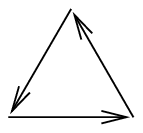}%
\end{picture}%
\setlength{\unitlength}{3947sp}%
\begingroup\makeatletter\ifx\SetFigFont\undefined%
\gdef\SetFigFont#1#2#3#4#5{%
  \reset@font\fontsize{#1}{#2pt}%
  \fontfamily{#3}\fontseries{#4}\fontshape{#5}%
  \selectfont}%
\fi\endgroup%
\begin{picture}(777,791)(1036,-470)
\put(1051, 14){\makebox(0,0)[lb]{\smash{{\SetFigFont{12}{14.4}{\familydefault}{\mddefault}{\updefault}{$w_1$}%
}}}}
\put(1756, 14){\makebox(0,0)[lb]{\smash{{\SetFigFont{12}{14.4}{\familydefault}{\mddefault}{\updefault}{$w_3$}%
}}}}
\put(1336,-406){\makebox(0,0)[lb]{\smash{{\SetFigFont{12}{14.4}{\familydefault}{\mddefault}{\updefault}{$w_2$}%
}}}}
\end{picture}%
\end{center}The
measures $\mu$ we are interested in are supported in a finite union
of lines parallel to one of these three vectors, and satisfy the following
two conditions.
\begin{enumerate}
\item On each segment which does not intersect other segments in its support,
$\mu$ is proportional to length; the constant of proportionality
is the \emph{density} of $\mu$ on that segment. The density of $\mu$
will be considered to be zero on segments outside its support.
\item For any point $A\in\mathbb{R}^{2}$, we have\begin{equation}
\delta_{1}^{+}(\mu,A)-\delta_{1}^{-}(\mu,A)=\delta_{2}^{+}(\mu,A)-\delta_{2}^{-}(\mu,A)=\delta_{3}^{+}(\mu,A)-\delta_{3}^{-}(\mu,A),\label{eq:balance-tension}\end{equation}
where $\delta_{j}^{\pm}(\mu,A)$ is the density of $\mu$ on the segment
$\{A\pm tw_{j}:t\in(0,\varepsilon)\}$ for small $\varepsilon$.
\end{enumerate}
Condition (2) is only relevant for the (finitely many) points $A$
for which at least three of the numbers $\delta_{j}^{\pm}$ are different
from zero. These are called \emph{branch points} of the measure $\mu$.
We will denote by $\mathcal{M}$ the convex cone consisting of all
measures satisfying conditions (1) and (2).

Assume now that $r$ is a positive number, and denote by $\triangle_{r}$
the (closed) triangle with vertices $0,rw_{1}$, and $r(w_{1}+w_{2})$.
The cone $\mathcal{M}_{r}\subset\mathcal{M}$ consists of those measures
$\mu$ whose branch points are contained in $\triangle_{r}$, and
whose support outside $\triangle_{r}$ consists of a finite number
of half-lines of the form $\{A+tw_{j}:t>0\}$ with $A\in\partial\triangle_{r}$
and $j\in\{1,2,3\}$. Analogously, $\mathcal{M}_{r}^{*}$ consists
of those measures $\mu$ whose branch points are contained in $\triangle_{r}$,
and whose support outside $\triangle_{r}$ is contained in a finite
number of half-lines of the form $\{A-tw_{j}:t>0\}$ with $A\in\partial\triangle_{r}$.
A point $A\in\partial\triangle_{r}$ such that $\{A\pm tw_{j}:t>0\}$
is contained in $\text{supp}(\mu)\setminus\triangle_{r}$ is called
an \emph{exit point} of $\mu$, and the corresponding density an \emph{exit
density}. The following figure shows the supports of a measure in
$\mathcal{M}_{r}$ and of a measure in $\mathcal{M}_{r}^{*}$. In
the case of $\mathcal{M}_{r}$, the boundary of $\triangle_{r}$ is
indicated by a dotted line, while for $\mathcal{M}_{r}^{*}$ the triangle
is colored light gray. The arrows indicate the half-lines in the support.

\begin{center}
\includegraphics{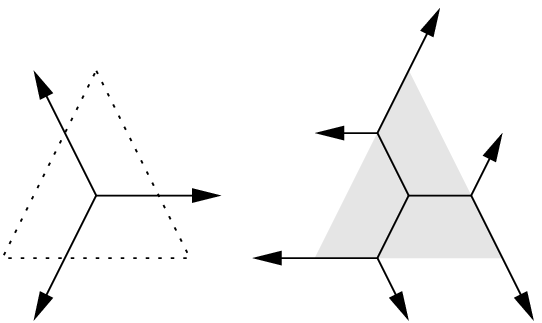}
\par\end{center}

\noindent Note that the first measure has 3 exit points, while the
second one has 6. The exit points and exit densities are always determined
by the restriction of $\mu$ to $\triangle_{r}$.

It will be useful to distinguish a subset of the exit points, called
the \emph{attachment points} of $\mu$ (or of the support of $\mu$).
An exit point of $\mu$ will be called an attachment point if either
\begin{enumerate}
\item [(a)] it is not a corner of $\triangle_{r}$, or
\item [(b)]it is a corner of $\triangle_{r}$, and the half-line through
that point, parallel to the opposite side of $\triangle_{r}$, is
in the support of $\mu$. 
\end{enumerate}
The two supports pictured above have three attachment points each.

A measure $\mu\in\mathcal{M}_{r}$ is said to be \emph{rigid} if there
is no other measure $\nu\in\mathcal{M}_{r}$ with the same exit points
and same exit densities as $\mu$. An analogous definition applies
to $\mathcal{M}_{r}^{*}$. A measure $\mu\in\mathcal{M}$, $\mu\ne0$,
is said to be \emph{extremal} if any measure $\nu\in\mathcal{M}$
satisfying $\nu\le\mu$ is of the form $c\mu$ for some constant $c$.
It was shown in \cite{bcdlt} (and the result will be reviewed below)
that rigid measures in $\mathcal{M}_{r}$ can be written in a unique
way as sums of extremal measures with distinct supports.

There is a duality which associates to each nonzero measure $\mu\in\mathcal{M}_{r}$
a measure $\mu^{*}\in\mathcal{M}_{\omega}^{*}$, where $\omega=\omega(\mu)>0$
is the weight of $\mu$, defined in Section \ref{sec:Weight-trace-etc}
below. If $\mu$ is rigid, then $\mu^{*}$ is rigid as well. We will
denote by ${\rm ext}(\mu)$ and ${\rm ext}(\mu^{*})$ the number of
extremal summands of a rigid measure $\mu$ and the corresponding
number for $\mu^{*}$. The number of attachment points of $\mu$ will
be denoted ${\rm att}(\mu)$. Our main result is as follows.
\begin{thm}
\label{thm:main}For every rigid measure $\mu\in\mathcal{M}_{r}$,
we have\[
{\rm ext}(\mu)+{\rm ext}(\mu^{*})={\rm att}(\mu)+1.\]

\end{thm}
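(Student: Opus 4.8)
The plan is to translate the identity into a vertex/edge/face count for the planar graph determined by the support of $\mu$, and to read each of the three terms off this graph. First I would record the structural consequences of rigidity. Every extremal measure in $\mathcal{M}_{r}$ should have connected, cycle-free (tree) support: a bounded complementary region would carry a nonzero balanced ``cycle'' measure that could be added to or subtracted from $\mu$ while preserving the exit data, contradicting rigidity. Granting this, the support of a rigid $\mu$ is a forest whose connected components are precisely the supports of its extremal summands, so that ${\rm ext}(\mu)$ equals the number of connected components of $\text{supp}(\mu)$; moreover each component must reach $\partial\triangle_{r}$ through at least one exit ray, since a segment cannot terminate in the interior without violating the balance condition \eqref{eq:balance-tension}. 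A preliminary point to check here is that each extremal summand of a rigid measure is itself rigid, which is what licenses applying the forest description componentwise.

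Next I would set up the Euler count. Intersecting $\text{supp}(\mu)$ with a large disk $D\supset\triangle_{r}$ that meets every exit ray transversally, and adjoining the circle $\partial D$ (cut into arcs by the exit rays), produces a connected planar graph. Write $b$ for the number of branch points, $E_{0}$ for the number of segments of $\text{supp}(\mu)$ inside $D$, $L$ for the number of exit rays (the leaves on $\partial D$), and $R$ for the number of bounded complementary regions of $\text{supp}(\mu)$ in $D$. Euler's relation $V-E+F=2$ then collapses to $R=E_{0}-b+1$, and the forest identity $E_{0}=b+L-{\rm ext}(\mu)$ gives the clean relation ${\rm ext}(\mu)+R=L+1$.

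The remaining work is to identify $R$ with ${\rm ext}(\mu^{*})$ and $L$ with ${\rm att}(\mu)$, and here the corners of $\triangle_{r}$ are the source of all the delicacy. For the first identification I would use the duality of Section~\ref{sec:Weight-trace-etc}: the construction of $\mu^{*}$ should carry each bounded complementary region of $\text{supp}(\mu)$ to exactly one extremal summand of $\mu^{*}$ (in the elementary ``$Y$'' example the three sectors produce three reversed $Y$'s, accounting for all six exit points of the dual, as in the figure), so that ${\rm ext}(\mu^{*})=R$ up to corner corrections. The discrepancy between $L$ and ${\rm att}(\mu)$ arises only at corners of $\triangle_{r}$ failing condition (b): each such non-attachment corner carries an extra exit ray, contributing to $L$ but not to ${\rm att}(\mu)$. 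The key claim is that the complementary region associated with such a corner is precisely the one that the duality does \emph{not} promote to an extremal summand of $\mu^{*}$, so that if there are $k$ non-attachment corners then $L={\rm att}(\mu)+k$ while ${\rm ext}(\mu^{*})=R-k$; substituting into ${\rm ext}(\mu)+R=L+1$ cancels the $k$ and yields ${\rm ext}(\mu)+{\rm ext}(\mu^{*})={\rm att}(\mu)+1$.

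I expect the main obstacle to be exactly this pair of identifications: establishing rigorously, from the definition in Section~\ref{sec:Weight-trace-etc}, that the duality sends complementary regions of $\mu$ bijectively to extremal components of $\mu^{*}$, and performing the local analysis at each corner to show that non-attachment corners contribute neither an attachment point to $\mu$ nor a counted region to $\mu^{*}$, so that the two corner defects cancel. The forest structure of extremal supports and the componentwise rigidity are comparatively routine once the region/summand correspondence for the dual is in hand; the corner bookkeeping is where the precise ``$+1$'' is finally pinned down.
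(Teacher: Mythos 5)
Your approach has a fatal structural gap at its first step, and the subsequent identifications inherit it. The ``tree structure'' of a rigid extremal measure is that of an abstract tree \emph{immersed} in the plane, not embedded: the image $f(T)$ can self-intersect, so the support of an extremal (let alone a general rigid) measure need not be cycle-free as a planar graph. The paper's closing example is exactly a counterexample to your premise: a rigid \emph{extremal} tree measure in $\mathcal{M}_{14}$ whose support contains a central pentagon, hence a bounded complementary region, with ${\rm ext}(\mu)=1$. Your proposed ``cycle measure'' supported on the boundary of such a region does not exist in $\mathcal{M}$, because at a $60^{\circ}$ or $120^{\circ}$ corner of the region the balance condition (\ref{eq:balance-tension}) forces the density to vanish; so bounded regions do not contradict rigidity. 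Consequently the forest identity $E_{0}=b+L-{\rm ext}(\mu)$ fails (it acquires a cycle-rank correction), and ${\rm ext}(\mu)$ is \emph{not} the number of connected components of the support: the extremal summands are indexed by equivalence classes of root edges under descendance, and their supports typically overlap heavily (see the six overlapping supports of the summands of $\mu^{*}$ in the paper's illustration). The identification ${\rm ext}(\mu^{*})=R$ is likewise unfounded: under the puzzle duality the complementary white regions of $\mu$ correspond to the \emph{branch points} of $\mu^{*}$, not to its extremal summands, and in the pentagon example one checks that the resulting counts do not match, so the two errors do not cancel.

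The paper's actual argument is of a completely different, linear-algebraic nature. It parametrizes the measures strictly homologous to $\mu$ by $\gamma\in\mathbb{R}_{+}^{{\rm ext}(\mu^{*})}$, using the extremal decomposition of the (rigid) dual $\mu^{*}=\sum_{j}\gamma_{j}\nu_{j}$, and considers the linear map $\Phi(\gamma)=(r(\gamma),\Phi_{1}(\gamma),\dots,\Phi_{q}(\gamma))$ recording the side length and the coordinates of the $q={\rm att}(\mu)$ attachment points. Rigidity makes $\Phi$ injective, so ${\rm ext}(\mu^{*})=\dim{\rm range}(\Phi)$. The range lies in the subspace cut out by the ${\rm ext}(\mu)$ trace identities $\sum_{i}\alpha_{i}^{(j)}x_{i}'=\omega(\mu_{j})r'$, one for each extremal summand $\mu_{j}$ of $\mu$, and these are linearly independent by Proposition \ref{pro:linear-indep}; the reverse inequality is a genuine perturbation theorem (Propositions \ref{pro:perturb-one-immersion} and \ref{pro:range-open-inV}) showing that every nearby point of that subspace is attained by perturbing the tree immersions underlying the $\mu_{j}$. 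None of this machinery is replaceable by an Euler-characteristic count of the support.
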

In particular, if ${\rm ext}(\mu)={\rm ext}(\mu^{*})=1$, we deduce
that ${\rm att}(\mu)=1$, in which case the support of $\mu$ must
be in one of the positions pictured below.

\begin{center}
\includegraphics{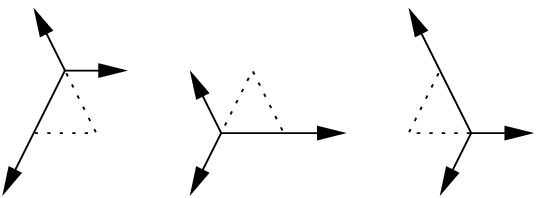}
\par\end{center}

\noindent Each of these measures has three exit points, but only one
of them is an attachment point. The attachment point is the same as
the unique branch point of the measure, and it must be deemed as \emph{two}
exit points. This is precisely \cite[Proposition 5.2]{bcdlt}.

The remainder of this paper is organized as follows. In Section \ref{sec:Weight-trace-etc}
we describe some basic properties of measures, as well as the duality
$\mu\mapsto\mu^{*}$. This material is contained, more or less explicitly,
in \cite{KT} and \cite{KTW}. Section \ref{sec:Rigidity-homology}
begins with the mechanics of decomposing rigid measures into their
extremal summands, and a linear algebraic consequence of this decomposition.
Theorem \ref{thm:secondary} shows how our main result reduces to
calculating the dimension of a certain convex set. Its proof requires
the study of measures obtained by immersing a tree (Section \ref{sec:Immersions-of-Trees})
and of the small perturbations of such immersions (Section \ref{sec:Perturbations}).
We conclude the paper with an illustration of the main result.

The role of measures in the study of the Littlewood-Richardson rule
and in the intersection theory of Grassmannians was first pointed
out in \cite{KTW}. The extremal structure of rigid measures was described
in \cite{bcdlt}, where it was shown how the associated intersections
of Schubert varieties can be written explicitly. Rigid extremal measures
have an underlying tree structure which was described in \cite{bcdlt,blt},
and which also plays a role in this paper. We should observe that
for integer values of $r$, the set $\mathcal{M}_{r}$ used in this
paper is not the same as its namesake in \cite{bcdlt} and \cite{blt}.
Indeed, in those papers the branch points are always of the form $p_{1}w_{1}+p_{2}w_{2}$
with integer $p_{1}$ and $p_{2}$. This hypothesis is natural when
dealing with intersection problems, but the arguments of \cite{bcdlt,blt}
do not depend on it in an essential manner.

\section{Weight, Trace Identity, and Dual\label{sec:Weight-trace-etc}}

Consider a measure $\mu\in\mathcal{M}_{r}$ for some $r>0$. We begin
by establishing two identities. The first one (\ref{eq:omega-def})
allows us to define the weight of $\mu$, while (\ref{eq:trace-identity})
is the trace identity which plays an important role in our arguments.

Equation (\ref{eq:balance-tension}) is equivalent to\[
\sum_{j=1}^{3}\sum_{\varepsilon=\pm}\varepsilon\delta_{j}^{\varepsilon}(\mu,A)w_{j}=0,\]
and therefore\[
\sum_{A}\sum_{j=1}^{3}\sum_{\varepsilon=\pm}\varepsilon\delta_{j}^{\varepsilon}(\mu,A)w_{j}=0,\]
where the sum is extended over all the branch points of $\mu$. Assume
that $A$ and $B$ are two branch points such that the segment $AB$
contains no other branch points of $\mu$. If $AB$ is a positive
multiple of $w_{j}$, and $\delta$ is the density of $\mu$ on $AB$,
then $\delta_{j}^{+}(\mu,A)=\delta_{j}^{-}(\mu,B)=\delta$, so that
these two terms will cancel out in the sum above. The only remaining
terms correspond therefore to the exit densities of $\mu$. Denote
by $\alpha_{1}^{(j)},\alpha_{2}^{(j)},\dots,\alpha_{k_{j}}^{(j)}$
the exit densities in the direction of $w_{j}$. We deduce that\[
\sum_{j=1}^{3}\left[\sum_{i=1}^{k_{j}}\alpha_{i}^{(j)}\right]w_{j}=0,\]
which in turn implies\begin{equation}
\sum_{i=1}^{k_{1}}\alpha_{i}^{(1)}=\sum_{i=1}^{k_{2}}\alpha_{i}^{(2)}=\sum_{i=1}^{k_{3}}\alpha_{i}^{(3)}.\label{eq:omega-def}\end{equation}
The common value of these sums is called the \emph{weight} of $\mu$,
and will be denoted $\omega(\mu)$.

There is another identity involving exit densities, which we will
call the \emph{trace identity} because of its connection with traces
of matrices. One way to deduce it is to observe that an arbitrary
measure $\mu\in\mathcal{M}$ represents the second differences of
a convex function on $\mathbb{R}^{2}$. More precisely, there exists
a (necessarily continuous) convex function $f:\mathbb{R}^{2}\to\mathbb{R}$
with the following property: for any two equilateral triangles $ABC,A'BC$
whose interiors do not intersect the support of $\mu$, we have\[
f(A)+f(A')-f(B)-f(C)=\mu(BC).\]
It suffices, of course, to require this condition for triangles whose
sides are parallel to the vectors $w_{j}$. Thus, the function $f$
is affine on each connected component in the complement of the support
of $\mu$, and the piecewise constant function $df(x+tw_{j})/dt$
jumps at the points where the line $x+tw_{j}$ intersects the support
of $\mu$ transversally, the amount of each jump being equal to the
density at the intersection point. It is easily seen that condition
(\ref{eq:balance-tension}) ensures that the various affine pieces
of $f$ fit together around each branch point of $\mu$. The function
$f$ is uniquely determined by its values at three non collinear points;
these values can be prescribed arbitrarily. We will write $\mu=\nabla^{2}f$
to indicate this relationship between $f$ and $\mu$. In the terminology
of \cite{KT} and \cite{buch-fulton-sat}, $-f$ (or its restriction
to $\triangle_{r}$) is a \emph{hive}.

Assume now that $\mu\in\mathcal{M}_{r}$ has exit densities $\{\alpha_{i}^{(j)}:1\le i\le k_{j}\}$
in the direction of $w_{j}$, $j=\{1,2,3\}$, and the corresponding
exit points are $A_{i}^{(j)}$. Denote by $X_{1}=rw_{1}$, $X_{2}=r(w_{1}+w_{2})$,
and $X_{3}=0$ the vertices of $\triangle_{r}$. The points $A_{i}^{(j)}$
are on the segment $X_{j}X_{j+1}$. We will denote by $x_{i}^{(j)}$
the distance from $A_{i}^{(j)}$ to $X_{j}$. The number $x_{i}^{(j)}\in[0,r]$
will also be called the \emph{coordinate} of $A_{i}^{(j)}$. Consider
now a convex function $f$ such that $\mu=\nabla^{2}f$. The function
$df(X_{j}+tw_{j+1})/dt$ jumps by $\alpha_{i}^{(j)}$ at $t=x_{i}^{(j)}$,
and therefore\[
f(X_{j+1})-f(X_{j})=r\beta_{j}+\sum_{i=1}^{k_{j}}\alpha_{i}^{(j)}(r-x_{i}^{(j)})=r\beta_{j}+r\omega(\mu)-\sum_{i=1}^{k_{j}}\alpha_{i}^{(j)}x_{i}^{(j)},\]
where $\beta_{j}=df(X_{j}+tw_{j+1})/dt$ for $t<0$. The function
$f$ can be chosen so that it is identically zero in the angle bounded
by $\{tw_{2}:t\ge0\}$ and $\{tw_{3}:t\ge0\}$. In this case, we have
$\beta_{3}=0$ and $\beta_{1}=\beta_{2}=-\omega(m)$, so that adding
the above identities for $j=1,2,3$ yields the \emph{trace identity}\begin{equation}
\sum_{i=1}^{k_{1}}\alpha_{i}^{(1)}x_{i}^{(1)}+\sum_{i=1}^{k_{2}}\alpha_{i}^{(2)}x_{i}^{(2)}+\sum_{i=1}^{k_{3}}\alpha_{i}^{(3)}x_{i}^{(3)}=r\omega(\mu).\label{eq:trace-identity}\end{equation}
 Let us observe that the exit point $A_{i}^{(j)}$ is an attachment
point of $\mu$ precisely when its coordinate $x_{i}^{(j)}$ is not
zero. Thus, only attachment points contribute significantly to the
sum in the left hand side.

Next we discuss the dual of a measure $\mu\in\mathcal{M}_{r}$. For
this purpose, we construct the \emph{puzzle} of $\mu|\triangle_{r}$
as follows. We translate the connected components of the complement
$\triangle_{r}\setminus{\rm supp}(\mu)$ away from each other in such
a way that the parallelogram formed by the two translates of a side
$AB$ in the support of $\mu$ has two sides which are $60^{\circ}$
clockwise from $AB$, and have length equal to the density of $\mu$
on $AB$. Condition (\ref{eq:balance-tension}) ensures that these
pieces fit together. There is a polygon, corresponding to each branch
point, which is not covered by these pieces, and which has sides equal
to the densities of the segments meeting at that point. This process
is illustrated below, where the white areas are the translated components
of $\triangle_{r}\setminus{\rm supp}(\mu)$, the connecting parallelograms
are dark gray, and the polygons corresponding to branch points are
light gray. The solid lines in the first picture represent the support
of $\mu$, and all of them are taken to have the same density for
this figure.

\begin{center}
\includegraphics{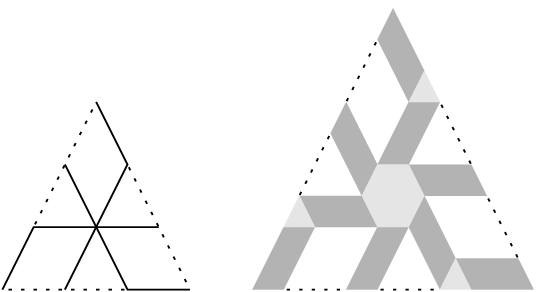}
\par\end{center}

\noindent The three kinds of pieces (white, dark gray parallelograms
 and light gray) form the puzzle of $\mu$, and the process of passing
from a measure to its puzzle is called \emph{inflation}. The puzzle
pieces cover an equilateral triangle with side $r+\omega(\mu)$ which
can, and generally will, be assumed to be precisely $\triangle_{r+\omega(\mu)}$.
We can now apply a dual process of {*}-\emph{deflation} to this puzzle
as follows. Consider a parallelogram $ABA'B'$ formed by the two translates
$AB,A'B'$ of a side in the support of $\mu$. Replace this parallelogram
with a line segment congruent to $AA'$, and assign to this segment
a density equal to the length of $AB$. Perform this operation for
all dark gray parallelograms, and discard the white pieces of the
puzzle. We obtain this way the restriction to $\triangle_{\omega(\mu)}$
of a measure $\mu^{*}\in\mathcal{M}_{\omega(\mu)}^{*}$, called the
\emph{dual} of $\mu$. For the particular measure considered above,
the support of the dual measure is pictured below. The densities are
again equal on all edges in the support.

\begin{center}
\includegraphics[scale=0.5]{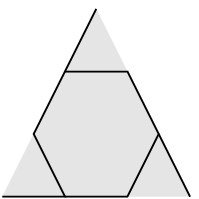}
\par\end{center}

The rigidity of a measure is equivalent to a geometric property of
its puzzle. Orient all the edges of the dark gray parallelograms in
the puzzle of $\mu$ so that they point away from the acute angles.
As shown in \cite{KTW}, $\mu$ is rigid if and only if the resulting
oriented graph has no \emph{gentle} cycle, i.e. a cycle with no sharp
turns. The measure inflated above is not rigid, as demonstrated by
the gentle cycle shown below.

\begin{center}
\includegraphics{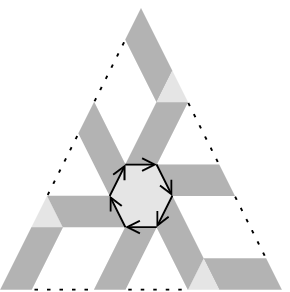}
\par\end{center}

\noindent This characterization of rigidity easily implies that the
dual of a rigid measure is also rigid. Indeed, $\mu$ and $\mu^{*}$
have the same puzzle.

\section{\label{sec:Rigidity-homology}Rigidity, Descendance, and Homology}

Consider a measure $\mu\in\mathcal{M}_{r}$ for some $r>0$, and let
$AB$ and $BC$ be two segments in the support of $\mu$ containing
no branch points in their interior. We will write $AB\to_{\mu}BC$
if one of the following two situations occurs:
\begin{enumerate}
\item $A,B,C$ are collinear, and there is a segment $BX$ with $\varangle XBC=60^{\circ}$
such that $\mu(BX)=0$;
\item $\varangle ABC=120^{\circ}$, and there is a segment $BX$ collinear
with $AB$ such that $\mu(BX)=0$.
\end{enumerate}
The relation $AB\to_{\mu}BC$ always implies that the density of $\mu$
on $BC$ is greater than or equal to the density on $AB$. More generally,
if $A_{0}A_{1},A_{1}A_{2},\dots,A_{n-1}A_{n}$ are segments in the
support of $\mu$ containing no branch points in their interior, we
write $A_{0}A_{1}\Rightarrow_{\mu}A_{n-1}A_{n}$ if $A_{i-1}A_{i}\to_{\mu}A_{i}A_{i+1}$
for $i=1,2,\dots,n$. In this case, the segment $A_{n-1}A_{n}$ is
called a \emph{descendant} of $A_{0}A_{1}$, and the sequence $A_{0}A_{1}\cdots A_{n}$
is called a \emph{descendance path}. The equivalence relation $AB\Leftrightarrow_{\mu}A'B'$
is defined by $AB\Rightarrow_{\mu}A'B'$ and $A'B'\Rightarrow_{\mu}AB$;
in order to obtain an equivalence relation, we also allow $AB\Leftrightarrow_{\mu}AB$.
A segment $AB$ in the support of $\mu$ is called a \emph{root edge}
if the relation $A'B'\Rightarrow_{\mu}AB$ implies $AB\Rightarrow_{\mu}A'B'$.
The following facts were proved in \cite[Section 3]{bcdlt} in the
case that $\mu$ is a rigid measure.
\begin{enumerate}
\item If $XY$ is in the support of $\mu$ and it is not a root edge, then
there is at least one descendance path from a root edge to $XY$.
Moreover, all descendance paths give $XY$ the same orientation.
\item For each root edge $AB$, there exists a measure $m\in\mathcal{M}_{r}$
supported by the descendants of $AB$, with density one on $AB$.
This measure $m$ is extremal, and it assigns integer densities to
all edges.
\item Let $m_{1},m_{2},\dots,m_{k}$ be the measures associated as in (2)
to a maximal family of inequivalent root edges, and let $\delta_{j}$
the density of $\mu$ on the root edges of $m_{j}$. Then we have
\[
\mu=\sum_{j=1}^{k}\delta_{j}m_{j}.\]

\end{enumerate}
Thus the number of extremal summands of $\mu$ can be determined entirely
from the geometry of the support of $\mu$. In particular, any measure
which has the same support as $\mu$ is rigid, and it can be written
as \[
\sum_{j=1}^{k}\gamma_{j}m_{j}\]
for some positive constants $\gamma_{j}$. 

Two extremal measures will be said to be \emph{inequivalent} if neither
of them is  a multiple of the other; this is equivalent to saying
that the measures have different supports. The measures $m_{1},m_{2},\dots,m_{k}$
are mutually inequivalent since the root edge of $m_{j}$ is not in
the support of $m_{i}$ for $i\ne j$.
\begin{prop}
\label{pro:linear-indep}Let $\mu\in\mathcal{M}_{r}$ be a rigid measure,
and let $A_{1},A_{2},\dots,A_{q}$ be its attachment points. Write
$\mu$ as a sum of mutually inequivalent extremal measures $\mu=\sum_{i=1}^{k}\mu_{i}$,
and denote by $\alpha_{i}^{(j)}$ the exit density of $\mu_{j}$ at
the point $A_{i}$. Then the vectors $\alpha^{(j)}=(\alpha_{i}^{(j)})_{i=1}^{q}\in\mathbb{R}^{q}$
are linearly independent. \end{prop}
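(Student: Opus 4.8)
The plan is to translate the desired linear independence into a statement about a single signed measure and then invoke rigidity. Suppose a relation $\sum_{j=1}^{k}c_{j}\alpha^{(j)}=0$ holds in $\mathbb{R}^{q}$, and set $\nu=\sum_{j=1}^{k}c_{j}\mu_{j}$. Since the $\mu_{j}$ have distinct root edges, each root edge lying outside the supports of the other summands, they are linearly independent as measures; so it suffices to prove $\nu=0$. By construction the exit density of $\nu$ at every attachment point $A_{i}$ is $\sum_{j}c_{j}\alpha_{i}^{(j)}=0$, and the goal is to upgrade this to the vanishing of \emph{all} exit densities of $\nu$.

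First I would extract the weight. Both the weight identity (\ref{eq:omega-def}) and the trace identity (\ref{eq:trace-identity}) are linear in the measure, hence remain valid for the signed combination $\nu$. In the trace identity applied to $\nu$, every summand on the left vanishes: an attachment point contributes zero because its $\nu$-exit density is zero, while every remaining exit point has coordinate zero. Consequently $r\,\omega(\nu)=0$, and since $r>0$ we get $\omega(\nu)=0$.

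Next I would account for the exit points of $\nu$ that are not attachment points. By the remark following (\ref{eq:trace-identity}), these are exactly the exit points of coordinate zero, i.e.\ exits situated at a corner $X_{j}$ in the direction $w_{j}$; as $X_{j}$ is the unique point of coordinate zero on the side $X_{j}X_{j+1}$, there is at most one such exit for each direction. Applying the weight identity (\ref{eq:omega-def}) to $\nu$ in the direction $w_{j}$, the attachment-point contributions vanish, so the lone corner exit density in that direction equals $\omega(\nu)=0$. Hence every exit density of $\nu$, at attachment points and at corners alike, is zero.

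Finally I would use rigidity. For $|\varepsilon|$ small the combination $\mu+\varepsilon\nu=\sum_{j}(1+\varepsilon c_{j})\mu_{j}$ has strictly positive coefficients, so it is a genuine nonnegative measure with the same support as $\mu$; as noted after facts (1)--(3) in Section~\ref{sec:Rigidity-homology}, it lies in $\mathcal{M}_{r}$ and is rigid. Because all exit densities of $\nu$ vanish, $\mu+\varepsilon\nu$ has precisely the same exit points and exit densities as $\mu$, so rigidity of $\mu$ forces $\mu+\varepsilon\nu=\mu$, whence $\nu=0$ and $c=0$. The main obstacle is the middle step: one must correctly isolate the non-attachment exit points and verify, through the interplay of the two identities, that the single scalar $\omega(\nu)$ governs exactly those corner densities left undetermined by the attachment data. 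Once this bookkeeping of corner exits is settled, the rigidity argument is immediate.
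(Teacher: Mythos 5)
Your proof is correct and follows essentially the same route as the paper's: use the weight (and trace) identities to upgrade the vanishing of the exit densities of $\nu=\sum_{j}c_{j}\mu_{j}$ at the attachment points to the vanishing of all of its exit densities, then invoke rigidity together with the fact that each $\mu_{j}$ owns a root edge lying outside the supports of the others. The only cosmetic difference is that the paper splits the dependence relation into its nonnegative and nonpositive parts and equates two genuine measures with identical exit data, whereas you keep the signed combination $\nu$ and feed the perturbation $\mu+\varepsilon\nu$ (which has the same support as $\mu$, hence is rigid) into the uniqueness statement; both devices amount to the same argument.
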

\begin{proof}
Assume to the contrary that the vectors $\alpha^{(j)}$ are linearly
dependent. After a permutation of the measures, we may assume that
there exist an integer $q_{0}\in\{1,2,\dots,q\}$, and nonnegative
numbers $\beta_{j}$ such that $\beta_{1}\ne0$ and the measures $\sum_{i=1}^{q_{0}}\beta_{i}\mu_{i},\sum_{i=q_{0}+1}^{q}\beta_{i}\mu_{i}$
have the same exit densities at all attachment points. Relation (\ref{eq:omega-def})
implies that the exit densities are the same at all exit points. The
supports of these measures are contained in the support of $\mu$,
hence they are both rigid. We deduce that \[
\sum_{i=1}^{q_{0}}\beta_{i}\mu_{i}=\sum_{i=q_{0}+1}^{q}\beta_{i}\mu_{i},\]
and this is a contradiction because the measure on the right hand
side of this equation assigns zero density to some root edge of $\mu_{1}$,
unlike the left hand side.
\end{proof}
Consider now two measures $\mu\in\mathcal{M}_{r},\mu'\in\mathcal{M}_{r'}$,
and denote by $\mathcal{V}$ the collection of all vertices of white
puzzle pieces in $\triangle_{r}$ determined by the support of $\mu$.
In other words, $\mathcal{V}$ consists of the branch points and the
exit points of $\mu$, plus the corners of $\triangle_{r}$ which
are not exit points. Denote by $\mathcal{V}'$ the corresponding collection
for $\mu'$. We say that $\mu$ and $\mu'$ are \emph{homologous}
if there exists a bijection $\varphi:\mathcal{V}\to\mathcal{V}'$
such that for any two points $X,Y\in\mathcal{V}$ we have (a) the
segment $XY$ is an edge of a white piece if and only if $\varphi(X)\varphi(Y)$
is an edge of a white piece, and (b) if $XY$ is an edge of a white
piece, then $\varphi(X)\varphi(Y)$ is parallel to $XY$, and $\mu(XY)=0$
if and only if $\mu'(\varphi(X)\varphi(Y))=0$.

The characterization of rigidity in terms of gentle cycles makes it
obvious that if $\mu$ is rigid, and $\mu'$ is homologous to $\mu$,
then $\mu'$ is rigid as well.

Let us say that $\mu'$ is \emph{strictly homologous} to $\mu$ if
any edge $XY$ in the support of $\mu$ has the same density as its
homologous edge $\varphi(X)\varphi(Y)$ in the support of $\mu'$.
Our main result follows from a careful analysis of the set\[
\mathcal{H}_{\mu}=\{\mu':\mu'\text{ is strictly homologous to }\mu\}.\]
To begin with, given $\mu'\in\mathcal{H}_{\mu}$, we have $\omega(\mu')=\omega(\mu)$,
and $\mu^{\prime*}$ has precisely the same support as $\mu^{*}$.
Indeed, the lengths of the sides of the puzzle pieces of $\mu^{*}$
are precisely the densities of $\mu$. Conversely, a measure $\mu'\in\mathcal{M}_{r'}$
such that $\omega(\mu')=\omega(\mu)$ and $\text{supp}(\mu^{\prime*})={\rm supp}(\mu^{*})$
necessarily belongs to $\mathcal{H}_{\mu}$.

Now, if $\mu$ is rigid then so is $\mu^{*}$, and therefore the measures
$\nu\in\mathcal{M}_{\omega(\mu)}^{*}$ with the same support as $\mu^{*}$
are given by the general formula\[
\nu=\sum_{j=1}^{p}\gamma_{j}\nu_{j},\]
where $p={\rm ext}(\mu^{*})$, $\nu_{1},\nu_{2},\dots,\nu_{p}$ are
inequivalent extremal measures, and $\gamma_{j}>0$ for all $j$.
Thus there is a bijection between $\mathcal{H}_{\mu}$ and $\mathbb{R}_{+}^{\text{ext}(\mu^{*})}$.
Given $\gamma\in\mathbb{R}_{+}^{\text{ext}(\mu^{*})}$, we set\[
r(\gamma)=\omega\left(\sum_{j=1}^{p}\gamma_{j}\nu_{j}\right),\quad\gamma=(\gamma_{1},\gamma_{2},\dots,\gamma_{p}),\]
and denote by $m(\gamma)\in\mathcal{M}_{r(\gamma)}$ the measure satisfying\[
m(\gamma)^{*}=\sum_{j=1}^{p}\gamma_{j}\nu_{j}.\]
Denote by $\mathcal{V}_{\gamma}$ the collection of all vertices of
white puzzle in $\triangle_{r(\gamma)}$ determined by the support
of $m(\gamma)$, and let $\varphi_{\gamma}:\mathcal{V}\to\mathcal{V}_{\gamma}$
be the bijection yielding the homology of $\mu$ and $m(\gamma)$.
If $A_{1},A_{2},\dots,A_{q}$ are the attachment points of $\mu$,
then $\varphi_{\gamma}(A_{1}),\dots,\varphi_{\gamma}(A_{q})$ are
the attachment points of $m(\gamma)$. If $X_{j}(\gamma)=\varphi_{\gamma}(X_{j})$,
$j=1,2,3$ are the vertices of $\triangle_{r(\gamma)}$, we have $\varphi_{\gamma}(A_{i})=X_{j}(\gamma)+\Phi_{i}(\gamma)w_{j}$
where $\Phi_{i}(\gamma)>0$ is the coordinate of $\varphi_{\gamma}(A_{i})$.
We also set $\Phi_{0}(\gamma)=r(\gamma)$. Theorem \ref{thm:main}
follows from the following result.
\begin{thm}
\label{thm:secondary}Let $\mu$ be a rigid measure, and let the maps
$\Phi_{0},\Phi_{1},\dots,\Phi_{q}$ be as defined above, with $q=\text{{\rm att}}(\mu)$.
\begin{enumerate}
\item The map $\Phi=(\Phi_{0},\Phi_{1},\dots,\Phi_{q}):\mathbb{R}_{+}^{\text{{\rm ext}}(\mu^{*})}\to\mathbb{R}^{\text{{\rm att}}(\mu)+1}$
is linear. 
\item The map $\Phi$ is one-to-one. 
\item The range of $\Phi$ has dimension $\text{{\rm att}}(\mu)+1-\text{{\rm ext}}(\mu)$.
\end{enumerate}
\end{thm}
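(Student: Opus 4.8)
The plan is to take the three assertions in order, keeping the dimension count in part (3) for last since it is where the real work lies.

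For part (1), I would first record that the duality $\mu\mapsto\mu^{*}$ interchanges the length and the density of each support edge: a side carrying length $\ell$ and density $d$ is replaced, under $*$-deflation, by a segment of length $d$ and density $\ell$. Since all the measures $m(\gamma)$ share the puzzle of $\mu$, their combinatorial structure and the directions of all edges are fixed, and the correspondence between the edges of $m(\gamma)$ and those of $m(\gamma)^{*}=\sum_{j}\gamma_{j}\nu_{j}$ is independent of $\gamma$. As the density of each $\nu_{j}$ on a given edge is a constant, the density of $m(\gamma)^{*}$ on that edge is homogeneous linear in $\gamma$; by the length–density swap, the \emph{length} of each edge of $m(\gamma)$ is therefore homogeneous linear in $\gamma$ (while its density stays constant, reflecting $m(\gamma)\in\mathcal{H}_{\mu}$). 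Placing $X_{3}(\gamma)$ at the origin, every vertex of $m(\gamma)$ is obtained by summing the vectors $\pm\ell_{e}w_{j(e)}$ along a path in the fixed edge graph; since $m(\gamma)$ is a genuine planar measure the closure conditions hold and this sum is path independent, so each vertex, and in particular each $\varphi_{\gamma}(A_{i})=X_{j}(\gamma)+\Phi_{i}(\gamma)w_{j}$, depends linearly on the edge lengths and hence on $\gamma$. This gives linearity of $\Phi_{1},\dots,\Phi_{q}$; and $\Phi_{0}(\gamma)=r(\gamma)=\omega(\sum_{j}\gamma_{j}\nu_{j})=\sum_{j}\gamma_{j}\omega(\nu_{j})$ is linear because the weight (\ref{eq:omega-def}) is a sum of exit densities.

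For part (2), I would suppose $\Phi(\gamma)=\Phi(\gamma')$. Equality of the zeroth coordinates gives $r(\gamma)=r(\gamma')$, so the two measures live over the same triangle; equality of the remaining coordinates places their attachment points at the same locations, while the non-attachment exit points, being corners, coincide automatically. Because both measures are strictly homologous to $\mu$ they carry equal densities on corresponding edges, so they have identical exit points and exit densities. Both are rigid, being homologous to the rigid $\mu$, and rigidity forces $m(\gamma)=m(\gamma')$. Applying $*$ yields $\sum_{j}\gamma_{j}\nu_{j}=\sum_{j}\gamma'_{j}\nu_{j}$, and since the inequivalent extremal measures $\nu_{1},\dots,\nu_{p}$ are linearly independent (each has a root edge lying outside the supports of the others) I conclude $\gamma=\gamma'$.

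For part (3), the upper bound $\dim({\rm range}\,\Phi)\le{\rm att}(\mu)+1-{\rm ext}(\mu)$ should come from the trace identity. Writing $m(\gamma)=\sum_{\ell=1}^{k}\delta_{\ell}m_{\ell}(\gamma)$ for its decomposition into $k={\rm ext}(\mu)$ extremal components and applying (\ref{eq:trace-identity}) to each $m_{\ell}(\gamma)\in\mathcal{M}_{r(\gamma)}$, and using that exit densities are constant along $\mathcal{H}_{\mu}$ and that only attachment points have nonzero coordinate, I obtain
\[
\sum_{i=1}^{q}\alpha_{i,\ell}\,\Phi_{i}(\gamma)=\omega(m_{\ell})\,\Phi_{0}(\gamma),\qquad \ell=1,\dots,k,
\]
where $\alpha_{i,\ell}$ is the exit density of $m_{\ell}$ at $A_{i}$. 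These are $k$ linear relations among $\Phi_{0},\dots,\Phi_{q}$, and their coefficient vectors are linearly independent, since restricting to the coordinates $i=1,\dots,q$ returns precisely the vectors shown independent in Proposition \ref{pro:linear-indep}. Hence the annihilator of the range has dimension at least $k$, which is the asserted upper bound. The matching lower bound is the heart of the matter and the step I expect to be genuinely hard: it amounts to showing that these trace relations exhaust \emph{all} linear relations among the $\Phi_{i}$, equivalently that the range attains the full dimension ${\rm att}(\mu)+1-{\rm ext}(\mu)$. Since combining this with parts (1)–(2), which already identify $\dim({\rm range}\,\Phi)$ with ${\rm ext}(\mu^{*})$, is exactly what yields Theorem \ref{thm:main}, the lower bound must be proved intrinsically and cannot invoke ${\rm ext}(\mu^{*})$. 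My plan is to realize each extremal component as an immersion of its underlying tree and to study the small perturbations of these immersions that preserve edge directions, the balance condition at branch points, and the homology type; counting the free parameters of such perturbations against the closure constraints they must satisfy, I would exhibit ${\rm att}(\mu)+1-{\rm ext}(\mu)$ deformations whose $\Phi$-images are independent. Establishing that this count is sharp—that no constraints survive beyond the ${\rm ext}(\mu)$ trace identities—is the crux, and it is precisely what the tree-immersion and perturbation analysis of the next two sections is meant to provide.
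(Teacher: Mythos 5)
Your treatment of parts (1) and (2) and of the upper bound in part (3) coincides with the paper's: linearity comes from the fact that the edge lengths of the white puzzle pieces of $m(\gamma)$ are the densities of the dual edges of $\sum_j\gamma_j\nu_j$, injectivity comes from rigidity plus the linear independence of the $\nu_j$, and the codimension-$\mathrm{ext}(\mu)$ bound comes from writing the trace identity for each extremal component of $m(\gamma)$ and invoking Proposition \ref{pro:linear-indep} to see that these $k$ relations are independent. This is exactly Lemma \ref{lem:range-of-Phi}. One small caution in part (2): you assert that $m(\gamma)$ and $m(\gamma')$ have equal densities on corresponding edges ``because both are strictly homologous to $\mu$''; that is correct, but it is worth noting that it is precisely the construction of $\mathcal{H}_\mu$ (strict homology forces the densities, homology plus $\Phi(\gamma)=\Phi(\gamma')$ forces the exit points) that lets rigidity finish the argument, and this is how the paper phrases it.

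The genuine gap is the lower bound in part (3), which you explicitly leave as a plan. Saying that one should ``count free parameters against closure constraints'' does not yet produce $\mathrm{att}(\mu)+1-\mathrm{ext}(\mu)$ independent deformations, and the counting is not formal: the paper's figure of a non-rigid sum of two tree measures shows that a sum of rigid tree measures can admit perturbations of its attachment points satisfying all the trace identities and yet fail to stay in the homology class, so rigidity must enter the argument in an essential, non-dimension-counting way. What the paper actually proves is (i) Proposition \ref{pro:perturb-one-immersion}: for a single tree measure, \emph{any} sufficiently small displacement of the attachment points satisfying the one trace identity is realized by a perturbed immersion with the same exit densities (this rests on Propositions \ref{pro:leaves-determine-measure} and \ref{pro:epsilon-delta}, an induction on the leaves of the tree); and (ii) Proposition \ref{pro:range-open-inV}: when the $k$ component immersions of a \emph{rigid} $\mu$ are perturbed simultaneously to a common set of displaced attachment points satisfying (\ref{eq:bigVspace}), the resulting sum is strictly homologous to $\mu$. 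Step (ii) is the crux you identify but do not supply: it requires the descendance structure of rigid measures (orientation of edges, the local pictures at branch points) and an induction showing that the ``closest vertex'' map $\psi$ is a bijection, i.e.\ $\kappa(B)=1$ for every vertex $B$. Without (ii), exhibiting perturbations of the individual trees does not show that their sum lies in $\mathcal{H}_\mu$, hence does not show that the corresponding point of $\mathbb{V}$ is in the range of $\Phi$. So the proposal correctly locates the difficulty and the right machinery, but the theorem is not proved until that step is carried out.
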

The linearity of $\Phi$ is easily verified. Indeed, the lengths of
the edges of white pieces in the puzzle of $m(\gamma)$ are equal
to the densities of the dual edges in the support of $\sum_{j=1}^{p}\gamma_{j}\nu_{j}$,
and these are obviously linear functions of $\gamma$. Part (2) follows
immediately from the rigidity of $\mu$. Indeed, $\Phi(\gamma)=\Phi(\gamma')$
implies that the measures $m(\gamma)$ and $m(\gamma')$ have the
same attachment points and the same exit densities, hence they must
coincide by rigidity. We conclude that $\sum_{j=1}^{p}\gamma_{j}\nu_{j}=\sum_{j=1}^{p}\gamma'_{j}\nu_{j}$,
so that $\gamma=\gamma'$ because the measures $\nu_{j}$ are linearly
independent. Assertion (3) is not as obvious, but one inequality follows
from the following result.
\begin{lem}
\label{lem:range-of-Phi}The range of $\Phi$ is contained in a subspace
$\mathbb{V}\subset\mathbb{R}^{\text{{\rm att}}(\mu)+1}$ of codimension
$\text{{\rm ext}}(\mu)$.\end{lem}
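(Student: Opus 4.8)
The plan is to exhibit $\operatorname{ext}(\mu)$ linearly independent linear equations that every vector in the range of $\Phi$ must satisfy; the subspace $\mathbb{V}$ will then be their common zero set, which automatically has codimension $\operatorname{ext}(\mu)$. The equations will come from applying the trace identity (\ref{eq:trace-identity}) separately to each extremal summand of $m(\gamma)$, rather than to $m(\gamma)$ as a whole.

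First I would decompose $m(\gamma)$. Because $m(\gamma)$ is strictly homologous to $\mu$, the combinatorial descendance structure of $\operatorname{supp}(m(\gamma))$ is identical to that of $\operatorname{supp}(\mu)$, so the decomposition recalled in Section~\ref{sec:Rigidity-homology} yields the same number $k=\operatorname{ext}(\mu)$ of inequivalent extremal summands, $m(\gamma)=\sum_{j=1}^{k}m_{j}(\gamma)$, with $m_{j}(\gamma)$ matching the summand $\mu_{j}$ of $\mu$. Strict homology preserves the density of every edge, hence preserves the root-edge densities and therefore every exit density; consequently the exit density $\alpha_{i}^{(j)}$ of $m_{j}(\gamma)$ at the attachment point $\varphi_{\gamma}(A_{i})$, and (via (\ref{eq:omega-def})) the weight $\omega(m_{j}(\gamma))=\omega_{j}:=\omega(\mu_{j})$, are constants independent of $\gamma$. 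Applying (\ref{eq:trace-identity}) to $m_{j}(\gamma)\in\mathcal{M}_{r(\gamma)}$, and noting that the only exit points with nonzero coordinate are the $\varphi_{\gamma}(A_{i})$, whose coordinates are $\Phi_{i}(\gamma)$, while $r(\gamma)=\Phi_{0}(\gamma)$, I obtain for each $j$ the relation
\[
\sum_{i=1}^{q}\alpha_{i}^{(j)}\Phi_{i}(\gamma)=\omega_{j}\,\Phi_{0}(\gamma),
\]
that is, $\langle v_{j},\Phi(\gamma)\rangle=0$ with $v_{j}=(-\omega_{j},\alpha_{1}^{(j)},\dots,\alpha_{q}^{(j)})\in\mathbb{R}^{q+1}$. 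I would then set $\mathbb{V}=\{v_{1},\dots,v_{k}\}^{\perp}$.

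It remains to see that these $k$ relations are independent, i.e. that $v_{1},\dots,v_{k}$ are linearly independent. This is handed to us by Proposition~\ref{pro:linear-indep}: the images of the $v_{j}$ under projection onto the last $q$ coordinates are exactly the vectors $\alpha^{(j)}=(\alpha_{i}^{(j)})_{i=1}^{q}$, which are linearly independent, and adjoining the extra entry $-\omega_{j}$ cannot create a dependence. Thus $\mathbb{V}$ has codimension $k=\operatorname{ext}(\mu)$ and contains the range of $\Phi$, as required. The main obstacle is not the final linear algebra but the preceding bookkeeping: one must verify carefully that the extremal decomposition of $m(\gamma)$ tracks that of $\mu$ under homology, and that strict homology genuinely freezes all the quantities $\alpha_{i}^{(j)}$ and $\omega_{j}$ along the family, so that each per-summand trace identity is an honest linear relation in $\Phi(\gamma)$. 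Once this is secured, the trace identity produces the relations and Proposition~\ref{pro:linear-indep} supplies their independence for free.
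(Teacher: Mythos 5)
Your proposal is correct and follows essentially the same route as the paper: apply the trace identity to each extremal summand (with coefficients $\alpha_i^{(j)}$ and $\omega(\mu_j)$ frozen along the family by strict homology) and invoke Proposition~\ref{pro:linear-indep} for the independence of the resulting $k$ linear relations. The paper states this more tersely, but the bookkeeping you spell out is exactly what its argument implicitly relies on.
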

\begin{proof}
Write $\mu=\mu_{1}+\mu_{2}+\cdots+\mu_{k}$ with $k=\text{{\rm ext}}(\mu)$,
and the $\mu_{j}$ are mutually inequivalent extremal measures. Denote
by $\alpha_{i}^{(j)}$ the exit density of $\mu_{j}$ at $A_{i}$.
The trace identity (\ref{eq:trace-identity}) can be written as\[
\sum_{i=1}^{q}\alpha_{i}^{(j)}\Phi_{i}(\gamma)=\omega(\mu_{j})\Phi_{0}(\gamma),\quad\gamma\in\mathbb{R}^{p},\]
for $j=1,2,\dots,k$. According to Proposition \ref{pro:linear-indep},
these equations are linearly independent, so they define a linear
subspace of codimension $k=\text{{\rm ext}}(\mu)$.
\end{proof}

\section{\label{sec:Immersions-of-Trees}Immersions of Trees}

Extremal rigid measures have an underlying tree structure, first described
in \cite{blt}. We will consider binary trees with a finite number
of branch points (i.e., vertices of order three) and with no vertices
of order one. In other words, the leaves of the tree do not have endpoints.
Each edge between two branch points, and each leaf, will be assigned
a number in $\{1,2,3\}$ in such a way that the three numbers assigned
around each branch point are distinct. This number will be called
the \emph{type} of the edge. We will assume that each tree has at
least one branch point. Given a tree $T$, an \emph{immersion} of
$T$ is a continuous map $f:T\to\mathbb{R}^{2}$ satisfying the following
conditions:
\begin{enumerate}
\item each edge of type $j$ joining two branch points is mapped homeomorphically
onto a segment parallel to $w_{j}$, 
\item each leaf of type $j$ is mapped homeomorphically onto a half-line
parallel to $w_{j}$.
\end{enumerate}
Let us point out that the type issue was avoided in \cite{blt} by
considering only trees embedded in $\mathbb{R}^{2}$ and only orientation-preserving
immersions.

Given an immersion $f$ of a tree $T$, there is a measure $\mu_{f}\in\mathcal{M}$
supported by $f(T)$ such that the density of a segment in $f(T)$
equals the number of preimages of that segment under $f$. Measures
of the form $\mu_{f}$ are called \emph{tree measures.} Two immersions
$f,g:T\to\mathbb{R}^{2}$ yield equal tree measures provided that
$f(t)=g(t)$ for every branch point $t\in T$. In fact, even less
information suffices to determine $\mu_{f}$.
\begin{prop}
\label{pro:leaves-determine-measure}Choose a point $t_{\ell}\in\ell$
for each leaf $\ell$ of a tree $T$. Two embeddings $f,g:T\to\mathbb{R}^{2}$
yield equal measures if $f(t_{\ell})=g(t_{\ell})$ for every $\ell$.\end{prop}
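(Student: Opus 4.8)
The plan is to reduce the statement to the fact recalled just above, namely that two immersions inducing the same images of all branch points yield equal tree measures. Thus it suffices to prove the following purely geometric claim: if $f(t_\ell)=g(t_\ell)$ for every leaf $\ell$, then $f(b)=g(b)$ for every branch point $b$ of $T$. I will establish this by induction on the number $B$ of branch points, the key geometric input being that two lines in $\mathbb{R}^2$ with distinct directions meet in exactly one point, together with the fact that $w_1,w_2,w_3$ are pairwise non-parallel.

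For the base case $B=1$, the tree consists of a single branch point $b$ incident to three leaves of types $1,2,3$. Since $b$ and $t_{\ell_j}$ both lie on the leaf $\ell_j$, their images both lie on the line $L_j$ through $f(t_{\ell_j})$ parallel to $w_j$. As $w_1$ and $w_2$ are linearly independent, $f(b)$ is the unique intersection point $L_1\cap L_2$, which is determined by $f(t_{\ell_1})$ and $f(t_{\ell_2})$ alone; the same formula applies to $g$, so $f(b)=g(b)$.

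For the inductive step, let $B\ge 2$ and consider the finite tree $T'$ whose vertices are the branch points of $T$ and whose edges are the edges of $T$ joining two branch points. Since $T'$ has at least two vertices, it has a vertex of degree one; the corresponding branch point $b_0$ of $T$ is therefore incident to exactly one interior edge $e$, of some type $j_3$, leading to a branch point $b'$, and to two leaves $\ell_1,\ell_2$ of distinct types $j_1\ne j_2$. Exactly as in the base case, $f(b_0)=g(b_0)$ is forced as the intersection of the lines through $f(t_{\ell_1})$ and $f(t_{\ell_2})$ parallel to $w_{j_1}$ and $w_{j_2}$. Now form the tree $\tilde T$ with $B-1$ branch points obtained from $T$ by deleting $b_0,\ell_1,\ell_2$ and promoting $e$ to a leaf of type $j_3$ attached at $b'$; the degree and type conditions at $b'$ are unchanged, so $\tilde T$ is again a tree satisfying our conditions. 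The immersions $f,g$ restrict to immersions $\tilde f,\tilde g$ of $\tilde T$, and choosing $b_0$ itself as the distinguished point on the new leaf (with all other distinguished points unchanged) we have $\tilde f=\tilde g$ on every distinguished leaf point. By the induction hypothesis $\tilde f$ and $\tilde g$ agree on all branch points of $\tilde T$, that is, on all branch points of $T$ other than $b_0$; together with $f(b_0)=g(b_0)$ this completes the induction and hence the proof.

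I expect the main point requiring care to be the inductive bookkeeping rather than any genuine difficulty: one must check that a branch point incident to two leaves always exists (supplied by a degree-one vertex of $T'$), that the reduced tree $\tilde T$ still meets the admissibility conditions at $b'$, and, most importantly, that the newly created leaf of $\tilde T$ can be equipped with a distinguished point, namely $b_0$, on which $\tilde f$ and $\tilde g$ already agree. Once these are in place, the elementary geometry of intersecting two lines of different directions does all the work.
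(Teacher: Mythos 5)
Your proof is correct and takes essentially the same approach as the paper's: find a branch point adjacent to two leaves of distinct types, recover its image as the unique intersection of two non-parallel lines, then prune and induct (the paper inducts on the number of leaves rather than of branch points, which is equivalent since a binary tree with $B$ branch points has $B+2$ leaves). The only cosmetic slip is that you delete $b_0$ and then declare it the distinguished point of the promoted leaf; as in the paper, one simply keeps $b_0$ as an interior point of the new leaf, on which $\tilde f$ and $\tilde g$ are already known to agree.
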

\begin{proof}
As observed above, it suffices to show that the value of $f$ at each
branch point is determined by the values $f(t_{\ell})$. There must
exist leaves $\ell_{1}$ and $\ell_{2}$ which meet at a branch point
$t_{0}\in T$, and $f(t_{0})$ is then precisely the intersection
of the lines containing $f(\ell_{1})$ and $f(\ell_{2})$. These lines
are determined by $f(t_{\ell_{1}})$ and $f(t_{\ell_{2}})$ because
their directions are dictated by the types of the two leaves, and
these types are distinct. Replace now the leaves $\ell_{1}$ and $\ell_{2}$
with a single leaf $\ell$ attached at $t_{0}$, and with type different
from those of $\ell_{1}$ and $\ell_{2}$. Also define $t_{\ell}=t_{0}$.
We obtain a new tree $T'$ with one fewer leaves than $T$. Define
an immersion $f'$ of $T'$ which agrees with $f$ on the common part
of $T$ and $T'$. This operation reduces the proof of the proposition
from $T$ to $T'$, and therefore we can proceed by induction from
the trivial case of a tree with three leaves.
\end{proof}
The case of the tree with three leaves shows that the points $f(t_{\ell})$
in the above proposition must satisfy a linear equation. In the case
of tree measures in $\mathcal{M}_{r}$, this is essentially the trace
identity. The following result shows that, other than this one equation,
the points $f(t_{\ell})$ can be perturbed more or less arbitrarily.
\begin{prop}
\label{pro:epsilon-delta}Let $T$ be a tree, and $S\subset T$ a
subset such that
\begin{enumerate}
\item for each leaf $\ell$ of $T$, the intersection $\ell\cap S$ consists
of a single point $t_{\ell}$; and
\item every point $s\in S$ is of the form $t_{\ell}$ for some $\ell$.
\end{enumerate}
Fix an immersion $f:T\to\mathbb{R}^{2}$ and a point $s_{0}\in S$.
For every $\varepsilon>0$ there exists $\delta>0$ with the following
property: for any function $g_{0}:S\setminus\{s_{0}\}\to\mathbb{R}^{2}$
such that\[
|g_{0}(s)-f(s)|<\delta,\quad s\in S\setminus\{s_{0}\},\]
there exists an immersion $g$ of $T$ such that $|f(t)-g(t)|<\varepsilon$
for all $t\in T$, and $g(s)=g_{0}(s)$ for $s\in S\setminus\{s_{0}\}$.

\end{prop}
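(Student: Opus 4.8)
The plan is to argue by induction on the number $n$ of leaves of $T$, following the cherry-collapsing scheme used in the proof of Proposition \ref{pro:leaves-determine-measure}. Throughout, the governing geometric fact is that two leaves meeting at a branch point have distinct types, so the lines carrying their images have non-parallel directions and meet in exactly one point, which depends continuously on the two lines. The base case is the tree with three leaves $\ell_1,\ell_2,\ell_3$ and single branch point $t_0$; after relabeling I may assume $s_0=t_{\ell_3}$. Given $g_0(t_{\ell_1}),g_0(t_{\ell_2})$ within $\delta$ of their values under $f$, I would set $g(t_0)=Q'$, the intersection of the line through $g_0(t_{\ell_1})$ parallel to $w_{j_1}$ and the line through $g_0(t_{\ell_2})$ parallel to $w_{j_2}$, where $j_1,j_2$ are the types of $\ell_1,\ell_2$. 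By continuity $Q'\to f(t_0)$ as $\delta\to 0$, so for $\delta$ small each difference $g_0(t_{\ell_i})-Q'$ is a positive multiple of the appropriate $w_j$ (as it is for $f$), and I may immerse each leaf as the half-line issuing from $Q'$ in its prescribed direction, with $g(t_{\ell_1}),g(t_{\ell_2})$ equal to the prescribed values and $g(t_{\ell_3})$ taken on the third half-line near $f(t_{\ell_3})$.

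For $n\ge 4$ I would first note that the branch points of $T$, joined by the internal edges, form a tree with $n-2\ge 2$ vertices, which therefore has at least two leaves; these correspond to branch points incident to two leaves of $T$. Since $s_0$ lies on at most one leaf, I can choose such a cherry $\{\ell_1,\ell_2\}$, meeting at a branch point $t_*$, with $s_0\notin\{t_{\ell_1},t_{\ell_2}\}$. Because $n\ge 4$, the third edge $e$ at $t_*$, of type $c$, is internal and joins $t_*$ to another branch point $t_1$. Collapsing this cherry exactly as in Proposition \ref{pro:leaves-determine-measure} yields a tree $T'$ with $n-1$ leaves in which $t_*$ is absorbed onto a leaf $\lambda$ of type $c$ issuing from $t_1$, with marked point $t_\lambda=t_*$; note that $s_0$ survives as a marked point of $T'$. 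Given the target $\varepsilon$, I would invoke the inductive hypothesis for $T'$, $s_0$, and a tolerance $\varepsilon'\le\varepsilon$ to obtain some $\delta'>0$, and then use continuity of the intersection map to pick $\delta>0$ so small that $|g_0(s)-f(s)|<\delta$ forces the induced data $g_0'$ on $S'\setminus\{s_0\}$ — namely $g_0'(s)=g_0(s)$ at the shared marked points and $g_0'(t_\lambda)=Q_*'$, the intersection computed from $g_0(t_{\ell_1}),g_0(t_{\ell_2})$ — to lie within $\delta'$ of $f$. The inductive hypothesis then produces an immersion $g'$ of $T'$ that is $\varepsilon'$-close to $f$ and matches $g_0'$ off $s_0$; in particular $g'(t_\lambda)=Q_*'$.

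I would then build $g$ by keeping $g=g'$ on the part of $T$ common to $T'$ — so every branch point of $T$ other than $t_*$, and every leaf other than $\ell_1,\ell_2$, inherits its image from $g'$ — setting $g(t_*)=Q_*'$, and immersing $\ell_1,\ell_2$ as the half-lines from $Q_*'$ of the matching types, with $g(t_{\ell_i})=g_0(t_{\ell_i})$. That $g$ is a genuine immersion reduces to the correct direction and sign along each edge at $t_*$: for $\ell_1,\ell_2$ this is the positivity argument of the base case, while for $e$ it follows because in $T'$ the point $t_\lambda=t_*$ lies on the ray $\lambda$ issuing from $t_1$ in direction $w_c$, so $Q_*'-g'(t_1)=g'(t_\lambda)-g'(t_1)$ is a positive multiple of $w_c$. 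The equalities $g(s)=g_0(s)$ for $s\ne s_0$ hold by construction.

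The step I expect to be the main obstacle is the \emph{uniform} closeness $|g(t)-f(t)|<\varepsilon$ on the non-compact leaves, since a naive rescaling of the ray parameter produces an error that grows linearly toward infinity. I would handle this by exploiting the freedom in the homeomorphism used to immerse each leaf: writing $f$ on a leaf of type $j$ as $x\mapsto f(t_*)+\phi(x)w_j$ with $\phi$ an increasing homeomorphism of $[0,\infty)$, I choose the homeomorphism for $g$ to agree with $\phi$ outside a compact interval and to interpolate near $t_\ell$ so as to hit the prescribed parameter value, which lies within $O(\delta)$ of $\phi$'s and so makes the interpolation a uniformly small perturbation. Then on each leaf $g(t)-f(t)$ equals $Q_*'-f(t_*)$ plus a uniformly small multiple of $w_j$, which is $<\varepsilon$ once $\delta$ (hence $|Q_*'-f(t_*)|$) is small; on the compact part, closeness is immediate from $g=g'$ and the inductive estimate. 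This completes the induction.
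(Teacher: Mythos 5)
Your proof is correct and follows essentially the same route as the paper's: induction on the number of leaves, collapsing a cherry of two leaves whose marked points avoid $s_{0}$ into a single leaf marked at the branch point, whose image is forced to be the intersection of the two perturbed lines, and then invoking the inductive hypothesis for the smaller tree. You supply some details the paper leaves implicit (the existence of a cherry avoiding $s_{0}$, the sign checks on edge directions, and the reparametrization needed for uniform closeness on the non-compact leaves), but the underlying argument is the same.
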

\begin{proof}
As in the preceding proposition, we proceed by induction on the number
of leaves. If $T$ has three leaves, the set $S$ contains either
one or three elements. In the case of one element, we can choose $\delta>0$
arbitrarily and define $g=f$. In the case of three elements, choose
leaves $\ell_{1},\ell_{2}$ such that $t_{\ell_{1}}\ne s_{0}\ne t_{\ell_{2}}$.
These leaves will meet at the unique branch point $t_{0}$ of $T$,
and the half-lines $f(\ell_{1})$ and $f(\ell_{2})$ meet at $f(t_{0})$.
The lines parallel to $f(\ell_{1})$ and $f(\ell_{2})$ and passing
through $g_{0}(t_{\ell_{1}})$ and $g_{0}(t_{\ell_{2}}),$ respectively,
meet at a point $A$ such that $|A-f(t_{0})|<2\delta$. The existence
of $g$ so that $g(t_{0})=A$, $g(t_{\ell_{1}})=g_{0}(t_{\ell_{1}})$,
and $g(t_{\ell_{0}})=g_{0}(t_{\ell_{0}})$ follows immediately provided
that $\delta$ is sufficiently small. Assume now that $T$ has more
than 3 leaves, and the proposition has been proved for trees with
fewer leaves. Choose two leaves $\ell_{1},\ell_{2}$ such that $t_{\ell_{1}}\ne s_{0}\ne t_{\ell_{2}}$
which intersect at $t_{0}$, and form a tree $T'$ and an immersion
$f'$ as in the preceding proof. For the tree $T'$ we choose the
set $S'=(S\setminus\{t_{\ell_{1}},t_{\ell_{2}}\})\cup\{t_{0}\}$,
and observe that $t_{\ell}=t_{0}$ for the new leaf $\ell$. Define
the map $g'_{0}:S'\backslash\{s_{0}\}\to\mathbb{R}^{2}$ by setting
$g_{0}'(s)=g_{0}(s)$ for $s\in S\setminus\{t_{\ell_{1}},t_{\ell_{2}}\}$,
while $g'(t_{0})$ is the intersection point of the lines parallel
to $f(\ell_{1})$ and $f(\ell_{2})$ and passing through $g_{0}(t_{\ell_{1}})$
and $g_{0}(t_{\ell_{2}}),$ respectively. Observe that $|g'_{0}(t_{0})-f'(t_{0})|<2\delta$.
The inductive hypothesis provides a positive number $\delta'$ corresponding
to the immersion $f'$. Choosing $\delta'\le\delta/2$ , we deduce
the existence of an immersion $g'$ of $T'$ such that $g'(s)=g_{0}(s)$
for $s\in S\setminus\{s_{0},t_{\ell_{1}},t_{\ell_{2}}\}$, $g'(t_{0})=g_{0}'(t_{0})$,
and $|g'(t)-f'(t)|<\varepsilon$ for $t\in T'$. The existence of
the required $g$ follows now easily if $\delta$ is sufficiently
small. Namely, define $g=g'$ on the common part of $T$ and $T'$,
and extend this function appropriately to the leaves $\ell_{1}$ and
$\ell_{2}$.
\end{proof}
The above proof yields $\delta\le\varepsilon/2^{b}$, where $b$ is
the number of branch points of $T$, and $b+2$ is the number of leaves.
This is in fact the best estimate for small $\varepsilon$. It is
easily seen from the preceding two proofs that the types of all the
segments of a tree are determined by the types of the leaves. Even
the type of a leaf is determined by the types of all the other leaves.

Different immersions of the same tree may yield measures which are
not homologous. An example is pictured below, where the thicker line
indicates density 2. 

\begin{center}
\includegraphics[scale=0.7]{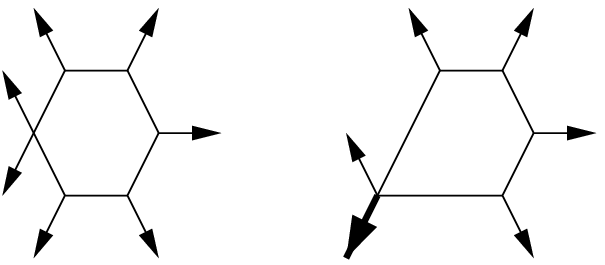}
\par\end{center}

\noindent An immersion of the tree pictured below, taking equal values
at the two points indicated by a dot, yields an immersion homologous
to the second one in the preceding figure. (The numbers indicate the
types of the leaves.)

\begin{center}
\includegraphics{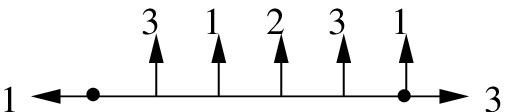}
\par\end{center}

We apply now the preceding results to a tree measure in $\mathcal{M}_{r}$.
\begin{prop}
\label{pro:perturb-one-immersion}Let $f:T\to\mathbb{R}^{2}$ be an
immersion such that $\mu_{f}\in\mathcal{M}_{r}$ for some $r>0$.
We denote by $A_{1},A_{2},\dots,A_{q}$ the attachment points of $\mu_{f}$,
by $x_{1},x_{2},\dots,x_{q}\in(0,r]$ their coordinates, and by $\alpha_{1},\alpha_{2},\dots,\alpha_{q}$
the corresponding exit densities. For every $\varepsilon>0$ there
exists $\delta>0$ with the following property: given $r'>0$ and
points $A'_{1},A'_{2},\dots,A'_{q}\in\partial\triangle_{r'}$ with
coordinates $x'_{1},\dots,x'_{q}\in(0,r']$ such that\[
\sum_{j=1}^{q}\alpha_{j}x'_{j}=\omega(\mu_{f})r'\]
and\[
|A_{j}'-A_{j}|<\delta,\quad j=1,2,\dots,q,\]
there exists an immersion $g:T\to\mathbb{R}^{2}$ such that $|f(t)-g(t)|<\varepsilon$
for all $t\in T$, $\mu_{g}$ belongs to $\mathcal{M}_{r'}$, it has
exit points $A'_{1},A'_{2},\dots,A'_{q}$ and the corresponding exit
densities are $\alpha_{1},\alpha_{2},\dots,\alpha_{q}$.\end{prop}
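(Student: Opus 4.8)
The plan is to move the leaf points of $f$ to prescribed targets by means of Proposition~\ref{pro:epsilon-delta}, and to control the one exit point that Proposition~\ref{pro:epsilon-delta} necessarily leaves free by invoking the trace identity (\ref{eq:trace-identity}). First I would record the leaf structure. Since $\mu_f$ is a tree measure, the exit density $\alpha_j$ at an attachment point $A_j$ is a positive integer, equal to the number of leaves of $T$ carried by $f$ onto the half-line $\{A_j+tw_k:t>0\}$; thus exactly $\alpha_j$ leaves exit at $A_j$, while the remaining leaves exit at corners of $\triangle_r$, which have coordinate $0$ and are not attachment points. For each leaf $\ell$ I choose $t_\ell$ to be the point that $f$ maps to the exit point of $\ell$ (the crossing of the image half-line with $\partial\triangle_r$ is transversal, so this is well defined), and set $S=\{t_\ell\}$, which satisfies the hypotheses of Propositions~\ref{pro:leaves-determine-measure} and~\ref{pro:epsilon-delta}. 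The guiding intuition is the remark after Proposition~\ref{pro:leaves-determine-measure}: the leaf-point images obey a single linear relation, which for leaf points on $\partial\triangle_{r'}$ is exactly the trace identity; this is why fixing all but one exit point will determine the last.

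Next I would fix the targets and apply Proposition~\ref{pro:epsilon-delta}. The value of $r'$ is given, so the vertices $X_1',X_2',X_3'$ of $\triangle_{r'}$ are determined; subtracting the two forms of the trace identity gives $\omega(\mu_f)(r'-r)=\sum_j\alpha_j(x_j'-x_j)$, whence $|r'-r|=O(\delta)$ and each $X_k'$ lies within $O(\delta)$ of $X_k$. I assign to each of the $\alpha_j$ leaves exiting at $A_j$ the target $A_j'$, and to each corner leaf the corresponding new corner $X_k'$; every target is then within $O(\delta)$ of the original leaf point. Choosing $\delta$ small enough that all these displacements fall below the threshold that Proposition~\ref{pro:epsilon-delta} supplies for the given $\varepsilon$, I pick $s_0\in S$ lying on a leaf $\ell_0$ that exits at an attachment point $A_{j_0}$ (possible since $q\ge1$), and apply Proposition~\ref{pro:epsilon-delta} to obtain an immersion $g$ with $|g(t)-f(t)|<\varepsilon$ for all $t\in T$ and with $g(t_\ell)$ equal to the prescribed target for every leaf $\ell\neq\ell_0$.

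Then I would verify that $\mu_g$ is the required measure. Because $g$ is $\varepsilon$-close to $f$, its branch points lie in $\triangle_{r'}$, and the image half-line of $\ell_0$ meets the same side of $\triangle_{r'}$ as for $f$, transversally, at a point near the original exit of $\ell_0$, hence at a coordinate near $x_{j_0}'>0$ and so interior to that side. Thus all exit points of $g$ lie on $\partial\triangle_{r'}$ and $\mu_g\in\mathcal{M}_{r'}$, so the trace identity (\ref{eq:trace-identity}) applies. Writing it as $\sum_\ell x_\ell(g)=\omega(\mu_f)r'$, the sum over all leaves with $x_\ell(g)$ the coordinate of the exit of $\ell$, and using that $x_\ell(g)$ equals the target coordinate for every $\ell\neq\ell_0$, the hypothesis $\sum_j\alpha_j x_j'=\omega(\mu_f)r'$ forces $x_{\ell_0}(g)=x_{j_0}'$. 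Since $\ell_0$ already exits on the correct side, it exits exactly at $A_{j_0}'$; in particular no exit point splits off from the other leaves of its group, so the attachment points of $\mu_g$ are $A_1',\dots,A_q'$ with exit densities $\alpha_1,\dots,\alpha_q$, as desired.

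The hard part is precisely this last step: Proposition~\ref{pro:epsilon-delta} forbids prescribing all leaf points, so the exit of $\ell_0$ is never under direct control, and the argument works only because the trace identity determines it automatically. Two points need care. First, one must establish $\mu_g\in\mathcal{M}_{r'}$ \emph{before} invoking the identity, so that it is available: the $\varepsilon$-estimate is used only to localize the free exit on the correct boundary \emph{segment}, and the identity then pins its coordinate exactly, avoiding circularity. This is why I chose $\ell_0$ at an attachment point, keeping its coordinate bounded away from $0$; the symmetric degeneracy $x_{j_0}'=r'$ is excluded by choosing $\ell_0$ with $x_{j_0}'<r'$ when possible, and otherwise handled by the same closeness argument. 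Second, if a branch point of $f$ happens to lie on $\partial\triangle_r$, one must check that the perturbation, which moves the boundary data tangentially and changes $r'$ by only $O(\delta)$, does not push that branch point out of $\triangle_{r'}$.
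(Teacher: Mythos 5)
Your proof is correct and follows essentially the same route as the paper's: prescribe all leaf images but one via Proposition~\ref{pro:epsilon-delta}, then use the trace identity for $\mu_g$ to force the one uncontrolled exit onto the required point $A'_{j_0}$. The differences are cosmetic --- the paper treats the single-exit-point case separately by translation and gets $|r'-r|=O(\delta)$ implicitly from the geometry of the parallel sides of $\triangle_r$ and $\triangle_{r'}$ rather than from the trace identity (your derivation of that bound is slightly circular, since the coordinates $x'_j$ are measured from the corners of $\triangle_{r'}$, which themselves move with $r'$), and you make explicit the treatment of leaves exiting at non-attachment corners, which the paper glosses over.
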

\begin{proof}
The case in which $\mu_{f}$ has only one exit point is trivial. Indeed,
in that case the coordinate of the exit point is equal to $r$, and
the trace identity implies that the point $A'$ has coordinate $r'$.
The map $g$ can simply be constructed as the translate $g=f+A'-A$,
and this will satisfy the requirements of the proposition if $\delta\le\varepsilon$
is sufficiently small. We will therefore assume that $\mu_{f}$ has
at least two exit points, in which case there exists $t_{0}\in T$
such that $f(t_{0})\in\triangle_{r}\setminus\partial\triangle_{r}$.
For each simple path in $T$ which starts at $t_{0}$ and ends with
one of the leaves, there exists a first point $s$ such that $f(s)\in\partial\triangle_{r}$.
We denote by $S$ the collection of all these points. We can write
$S=\bigcup_{j=1}^{q}S_{j}$ so that $f(t)=A_{j}$ for $t\in S_{j}$.
Moreover, the density $\alpha_{j}$ is precisely the cardinality of
$S_{j}$. Fix an arbitrary point $s_{0}\in S_{1}$, and let $\delta_{0}$
be provided by Proposition \ref{pro:epsilon-delta}, and choose $\delta<\delta_{0}$
such that $3\delta$ is smaller than all the segments determined on
$\partial\triangle_{r}$ by the points $A_{j}$ and the corners of
$\triangle_{r}$. For each $s\in S\setminus\{s_{0}\}$ such that $f(s)=A_{j}$,
we set $g_{0}(s)=A'_{j}$. The choice of $\delta$ implies the existence
of an immersion $g:T\to\mathbb{R}^{2}$ such that $g(s)=g_{0}(s)$
for $s\in S\backslash\{s_{0}\}$. We can also assume that the point
$A=g(s_{0})\in\partial\triangle_{r'}$, and the shortest path from
$t_{0}$ to $s_{0}$ contains no other points in $\partial\triangle_{r'}$.
The choice of $\delta$ ensures that $A$ is on the same side of $\triangle_{r'}$
as $A'_{1}$. Clearly the measure $\mu_{g}$ is in $\mathcal{M}_{r'}$,
and its attachment points are $A'_{j},j\ge2,$ with exit density $\alpha_{j}$;
$A'_{1}$ with exit density $\alpha_{1}-1$; and finally $A$ with
density $1$. To conclude the proof, we will show that in fact $A=A_{1}'.$
Denote indeed by $x$ the coordinate of $A$, and write the trace
identity for $\mu_{g}$:\[
x+(\alpha_{1}-1)x'_{1}+\sum_{j=2}^{q}\alpha_{j}x'_{j}=\omega(\mu_{g})r'.\]
Since $\omega(\mu_{g})=\omega(\mu_{f})$, this equation, combined
with the hypothesis, implies $x=x'_{1}$, and therefore $A=A_{1}'$,
as claimed.
\end{proof}

\section{\label{sec:Perturbations}Perturbations of Rigid Measures}

Fix a rigid measure $\mu$, and assume that it assigns unit density
to all of its root edges. As seen above, we can write $\mu$ as a
sum of extremal measures\[
\mu=\sum_{j=1}^{k}\mu_{j},\]
where $k=\text{{\rm ext}}(\mu)$, each $\mu_{j}$ assigns unit mass
to its root edges, and the support of $\mu_{j}$ consists of all the
descendants of some root edge $e_{j}$ of $\mu$. It was shown in
\cite{blt} that $\mu_{j}$ is of the form $\mu_{f_{j}}$ for some
immersion $f_{j}$ of a tree $T_{j}$. More precisely, choose for
each $j$ a point $P_{j}$ in the interior of $e_{j}$, and a point
$t_{j}\in T_{j}$ such that $f(t_{j})=P_{j}$. Then every simple path
starting at $t_{j}$ is mapped to a descendance path starting at $P_{j}$;
in fact, every descendance path starting at $P_{j}$ can be obtained
this way, and this is essentially how the tree $T_{j}$ is constructed.
We denote by $A_{1},A_{2},\dots,A_{q}$ the attachment points of $\mu$,
where $q=\text{{\rm att}}(\mu)$, and we let $\alpha_{i}^{(j)}$ be
the exit density of $\mu_{j}$ at the point $A_{i}$. Clearly, the
range of the map $\Phi$ considered in Theorem \ref{thm:secondary}
contains the point $(r,x_{1},x_{2},\dots,x_{q})$, where $x_{i}$
is the coordinate of the point $A_{i}$. The space $\mathbb{V}$ of
Lemma \ref{lem:range-of-Phi} consists of those triples $(r',x'_{1},x'_{2},\dots,x_{q}')\in\mathbb{R}^{\text{{\rm att}}(\mu)+1}$
satisfying the linearly independent equations\begin{equation}
\sum_{i=1}^{q}\alpha_{i}^{(j)}x'_{i}=\omega(\mu_{j})r',\quad j=1,2,\dots,k.\label{eq:bigVspace}\end{equation}
Therefore assertion (3) of Theorem \ref{thm:secondary} follows from
Lemma \ref{lem:range-of-Phi} and the following result.
\begin{prop}
\label{pro:range-open-inV}With the above notation, there exists $\delta>0$
such that any point $(r',x'_{1},x'_{2},\dots,x_{q}')\in\mathbb{R}^{\text{{\rm att}}(\mu)+1}$
satisfying $|r'-r|<\delta$, $|x'_{i}-x_{i}|<\delta$ for $i=1,2,\dots,q$,
and equations \emph{(\ref{eq:bigVspace}), }belongs to the range of
$\Phi$.\end{prop}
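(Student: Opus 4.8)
The plan is to realize every admissible perturbed target as the data of an explicit measure that is strictly homologous to $\mu$. Concretely, given $(r',x_1',\dots,x_q')\in\mathbb{V}$ close to the base point $(r,x_1,\dots,x_q)$, I would perturb each extremal summand $\mu_j=\mu_{f_j}$ separately, feeding all of them the \emph{same} perturbed attachment points, and then add the resulting tree measures. Since each member of $\mathcal{H}_\mu$ corresponds to a point of the range of $\Phi$, producing such a $\mu'\in\mathcal{H}_\mu$ with attachment points of coordinates $x_i'$ inside $\triangle_{r'}$ exhibits $(r',x_1',\dots,x_q')$ as $\Phi(\gamma')$ for the corresponding dual parameter $\gamma'$.

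For the setup, let $j(i)\in\{1,2,3\}$ denote the side of $\partial\triangle_{r'}$ carrying $A_i$, and let $A_i'\in\partial\triangle_{r'}$ be the point of coordinate $x_i'$ on that side. For $\delta$ small relative to the distances separating the $A_i$ from each other and from the corners of the triangle, each $A_i'$ lies in the relative interior of the correct side and $|A_i'-A_i|\le C\delta$ for a fixed constant $C$. I would then apply Proposition \ref{pro:perturb-one-immersion} to each $f_j$: its attachment points are exactly those $A_i$ with $\alpha_i^{(j)}\ne 0$, and the constraint required there, namely $\sum_i\alpha_i^{(j)}x_i'=\omega(\mu_j)r'$, is precisely the $j$-th equation of \eqref{eq:bigVspace}, which the target satisfies. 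Choosing $\delta<\min_j\delta_j/C$, where $\delta_j$ is supplied by Proposition \ref{pro:perturb-one-immersion} for a fixed small $\varepsilon$, yields immersions $g_j$ with $\mu_{g_j}\in\mathcal{M}_{r'}$, exit points $A_i'$, exit densities $\alpha_i^{(j)}$, and $|f_j(t)-g_j(t)|<\varepsilon$ for all $t$. Because every tree receives the same points $A_i'$, the perturbations share their attachment data, and by Proposition \ref{pro:leaves-determine-measure} each $\mu_{g_j}$ is then completely determined by the family $\{A_i'\}$ together with the (unchanged) tree $T_j$.

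Finally I would set $\mu'=\sum_{j=1}^{k}\mu_{g_j}\in\mathcal{M}_{r'}$, whose attachment points are $A_1',\dots,A_q'$ with coordinates $x_i'$, and conclude $(r',x_1',\dots,x_q')\in\mathrm{range}(\Phi)$ \emph{provided} $\mu'$ is strictly homologous to $\mu$. Establishing this last point is the heart of the matter and the step I expect to be the main obstacle. The individual strict homology $\mu_{g_j}\sim\mu_{f_j}$ is immediate, since $g_j$ immerses the same tree as $f_j$ and, for $\varepsilon$ small, covers each edge with the same number of sheets, so the sheet counts and hence the densities of each summand are preserved. The genuine difficulty is that distinct summands may share edges and branch points, and adding the perturbations must neither split a shared branch point nor destroy an overlap, since either would change $|\mathcal{V}|$ and kill the bijection $\varphi_\gamma\colon\mathcal{V}\to\mathcal{V}_\gamma$ underlying the homology. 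I would resolve this by arguing that, through the descendance structure, the position of each interior branch point is obtained by intersecting lines whose directions are fixed by the edge types and which are pinned to the common attachment points $A_i'$; because all summands are anchored to the same data $\{A_i'\}$ and follow the same descendance combinatorics (recall that all descendance paths impose a single consistent orientation on each edge), two summands meeting $\mu$ at a common branch point yield the same perturbed point, and edges that coincide in $\mu$ continue to coincide in $\mu'$. For $\delta$ small no new coincidences are created either, so the finite arrangement of segments keeps its incidence pattern; the natural vertex bijection then realizes the homology, strict by the sheet-count observation, completing the proof.
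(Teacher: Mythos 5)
Your construction is exactly the paper's: perturb each tree summand $f_j$ via Proposition \ref{pro:perturb-one-immersion} with the common targets $A_i'$, sum the resulting tree measures $\mu_{g_j}$, and reduce everything to showing $\mu'=\sum_j\mu_{g_j}$ is strictly homologous to $\mu$. Your sketch of that last step --- interior vertices are forced, via the single consistent descendance orientation, to be intersections of lines of fixed type traced back from the shared boundary data, so shared branch points cannot split and no new incidences appear for small $\delta$ --- is precisely the paper's induction on the sets $\mathcal{V}_n$ ordered by descendance depth from $\partial\triangle_r$, so the approach matches in all essentials.
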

\begin{proof}
Denote, as before, by $\mathcal{V}$ the set consisting of all the
vertices of the polygons into which $\text{{\rm supp}}(\mu)$ divides
$\triangle_{r}$, and denote by $5\varepsilon$ the shortest distance
between two points in $\mathcal{V}$. Choose $\delta_{0}<\varepsilon$
satisfying the conclusion of Proposition \ref{pro:perturb-one-immersion}
for each of the immersions $f_{j}$, $j=1,2,\dots,k$. We will show
that our proposition is satisfied for $\delta=\delta_{0}/2$. Assume
indeed $(r',x'_{1},x'_{2},\dots,x_{q}')\in\mathbb{R}^{\text{{\rm att}}(\mu)+1}$
satisfies the hypothesis, and denote by $A'_{i}\in\partial\triangle_{r}$
the point with coordinate $x'_{i}$ such that $|A_{i}'-A_{i}|<\delta$.
Proposition \ref{pro:perturb-one-immersion} implies the existence
of immersions $g_{j}$ of $T_{j}$, $j=1,2,\dots,k$, such that the
measures $\mu_{g_{j}}$ belong to $\mathcal{M}_{r'},$ $|g_{j}(t)-f_{j}(t)|<\varepsilon$
for $t\in T_{j}$, and $\mu_{g_{j}}$ has exit density $\alpha_{i}^{(j)}$
at the point $A'_{i}$. Finally, set $\mu'=\sum_{j=1}^{k}\mu_{g_{j}}$.
The following picture illustrates a typical branch point of $\mu$,
along with its hypothetical perturbation in the support of $\mu'$.

\begin{center}\includegraphics{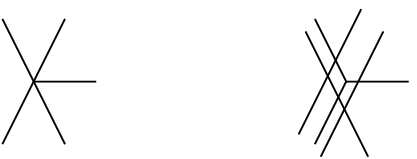}\end{center}

To conclude the proof, it will suffice to show that $\mu'$ is strictly
homologous to $\mu$. Denote by $\mathcal{V}'$ the collection of
vertices of white puzzle pieces corresponding to $\mu'$, and define
a map $\psi:\mathcal{V}'\to\mathcal{V}$ which associates to each
point $B'\in\mathcal{V}'$ the \emph{closest} point $B\in\mathcal{V}$.
Observe that the points $B'\in\mathcal{V}'$ not on the boundary of
$\triangle_{r'}$ are of two kinds. The first are of the form $g_{j}(y)$,
with $y$ a branch point of the tree $T_{j}$, and in this case we
have $\psi(B')=f_{j}(Y)$. The second kind arise as intersections
$g_{i}(e_{i})\cap g_{j}(e_{j})$, where $e_{i}$ and $e_{j}$ are
edges of $T_{i}$ and $T_{j}$ respectively, and their types are different.
In this case we have $\psi(B')=f_{i}(e_{i})\cap f_{j}(e_{j})$. In
both cases, the distance from $B'$ to $\psi(B')$ is less than $2\varepsilon$.
Hence our choice of $\varepsilon$ implies that this map is well-defined. 

Clearly we have $\psi(A'_{i})=A_{i}$ and $\psi^{-1}(A_{i})=\{A_{i}'\}$
for every $i$. More generally, for every point $B\in\mathcal{V}$,
let us denote by $\kappa(B)$ the cardinality of $\psi^{-1}(B)$,
and observe that $\kappa(B)>0$ for every $B\in\mathcal{V}$. In other
words, $\psi$ is onto. (For the above picture, we would have $\kappa=6$.)
Assume that $BC$ is a white piece edge in the support of $\mu$,
and $\mu(BC)=M$. In other words, there exist $j_{1},j_{2},\dots,j_{M}\in\{1,2,\dots,k\}$
and edges $e_{j_{i}}$ in $T_{j_{i}}$ such that $f_{j_{i}}(e_{j_{i}})$
contains $BC$ for $i=1,2,\dots,M$. Thus there are segments $e'_{j_{i}}\subset e_{j_{i}}$
such that $f_{j_{i}}(e'_{j_{i}})=BC$. The endpoints of the segments
$g_{j_{i}}(e'_{j_{i}})$ are within $2\varepsilon$ from some points
$B'_{i},C'_{i}\in\mathcal{V}'$, and in fact $\{B'_{1},B'_{2},\dots,B'_{M}\}\in\psi^{-1}(B)$
and $\{C'_{1},C'_{2},\dots,C'_{M}\}\in\psi^{-1}(C)$. The proposition
will therefore be proved if we can show that $\kappa(B)=1$ for every
$B\in\mathcal{V}$. Indeed, if that were the case we would have $\mu'(B_{1}'C_{1}')=\mu(BC)=M$
because all the edges of the trees $T_{j}$, other than $e_{j_{1}},\dots,e_{j_{M}}$,
are mapped by $g_{j}$ to segments which do not contain $B'_{1}C'_{1}$.
The map $\psi$ would thus be the bijection witnessing the strict
homology of $\mu$ and $\mu'$. 

We have already noted that $\kappa(B)=1$ if $B\in\partial\triangle_{r}$.
Select for each $j\in\{1,2,\dots,k\}$ a root edge $e_{j}$ of $\mu$
which is also a root edge for $\mu_{j}$, pick a point $P_{j}$ in
the interior of $e_{j}$, and let $t_{j}\in T_{j}$ satisfy $f(t_{j})=P_{j}$.
Given the points $A,B\in\mathcal{V}$, we will write $A\to B$ if
there is a descendance path $Y_{0}Y_{1}\cdots Y_{n}$ such that $Y_{n-1}=A,Y_{n}=P$,
and $Y_{0}=P_{j}$ for some $j\in\{1,2,\dots,k\}$. Note that we cannot
have $A\to B$ and $B\to A$. Define now sets $\mathcal{V}_{0}\subset\mathcal{V}_{1}\subset\cdots\subset\mathcal{V}$
as follows: $\mathcal{V}_{0}=\mathcal{V}\cap\partial\triangle_{r}$,
and inductively $\mathcal{V}_{n+1}$ consists of those vertices $B$
with the property that\[
\{C\in\mathcal{V}:B\to C\}\subset\mathcal{V}_{n}.\]
The properties of descendance paths for a rigid measure ensure that
we have $\mathcal{V}_{n}=\mathcal{V}$ for sufficiently large $n$.
We proceed to prove by induction on $n$ that $\kappa(B)=1$ for every
$B\in\mathcal{V}_{n}$. We already know that this is true for $n=0$,
so assume that it has been proved for all $n<N$, and let $B\in\mathcal{V}_{N}$.
Since $N>0$, $B$ is in the interior of $\triangle_{r}$, and $B$
is a branch point of $\mu$. The support of $\mu$ in the neighborhood
of $\mu$ must be in one of the following situations up to a rotation
or reflection. (This is easily deduced from the fact that the edges
in the support of $\mu$ are given a unique orientation by the relation
of descendance from any root edge. Thus, for instance, the support
of $\mu$ cannot contain six edges meeting at $B$ since this would
not allow descendance past that point.)

\noindent \begin{center}\includegraphics{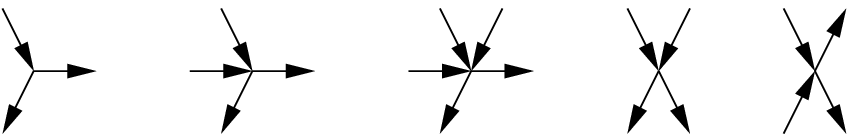}\end{center}

\noindent The arrows indicate the orientation given by descendance
from one of the points $P_{j}$. In all of these situations, there
are precisely two points $C_{1},C_{2}\in\mathcal{V}_{N-1}$ such that
$B\to C_{1}$ and $B\to C_{2}$. Moreover, $B$ is the intersection
of the lines passing through $C_{1},C_{2}$ and parallel to $w_{\ell_{1}},w_{\ell_{2}}$,
respectively, where $\ell_{1},\ell_{2}\in\{1,2,3\}$ are distinct.
The inductive hypothesis implies the existence of unique points $C'_{1}\in\psi^{-1}(C_{1})$
and $C_{2}'\in\psi^{-1}(C_{2})$. Denote by $X$ the intersection
of the lines passing through $C'_{1},C'_{2}$ and parallel to $w_{\ell_{1}},w_{\ell_{2}}$,
respectively. We will conclude the proof by showing that $\psi^{-1}(B)=\{X\}$.
Assume indeed that $B'\in\psi^{-1}(B)$. To do this, it suffices to
show that, given $j$ and a point $s_{j}\in T_{j}$ such that $f_{j}(s_{j})=B$,
the map $g_{j}$ maps some point in the neighborhood of $s_{j}$ to
$X$. There are two situations to consider.
\begin{enumerate}
\item If $s_{j}$ is a branch point for $T_{j}$, then two of the branches
are mapped by $f_{j}$ to line segments passing through $C_{1},C_{2}$
and parallel to $w_{\ell_{1}},w_{\ell_{2}}$, respectively. These
two branches are mapped by $g_{j}$ to segments through $C'_{1},C'_{2}$
and parallel to $w_{\ell_{1}},w_{\ell_{2}}$, respectively, and thus
$g(s_{j})=X$ in this case.
\item If $s_{j}$ is not a branch point for $T_{j}$, then $f_{j}$ maps
the branch containing $s_{j}$ to a line passing through $C_{i}$
and parallel to $w_{\ell_{i}}$ for $i=1$ or $i=2$. It follows that
$g_{j}$ maps this branch to a line passing through $C'_{i}$ and
parallel to $w_{\ell_{i}}$, and this line passes through $X$. It
follows that $g_{j}(s'_{j})=X$ for some $s'_{j}$ on this branch.
\end{enumerate}
The proposition follows.

\end{proof}
The rigidity assumption cannot be discarded from the hypothesis of
the preceding proposition, even for measures which are sums of rigid
tree measures. The following figure shows a sum of two tree measures
(the support of one of them in dashed lines) which is perturbed to
a measure which is not homologous to the original one.

\noindent \begin{center}
\includegraphics{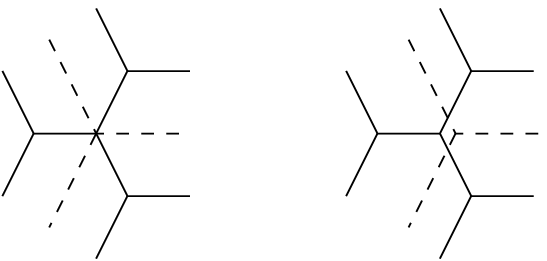}
\par\end{center}

We conclude with an illustration of our main result. The following
figure represents the support (intersected with $\triangle_{14}$)
of a rigid, extremal tree measure $\mu\in\mathcal{M}_{14}$. Clearly
we have ${\rm att}(\mu)=6$ and, if we assign unit density to its
root edges (which include, for instance, the edges of the central
pentagon) we have $\omega(\mu)=11$.

\noindent \begin{center}
\includegraphics[scale=0.5]{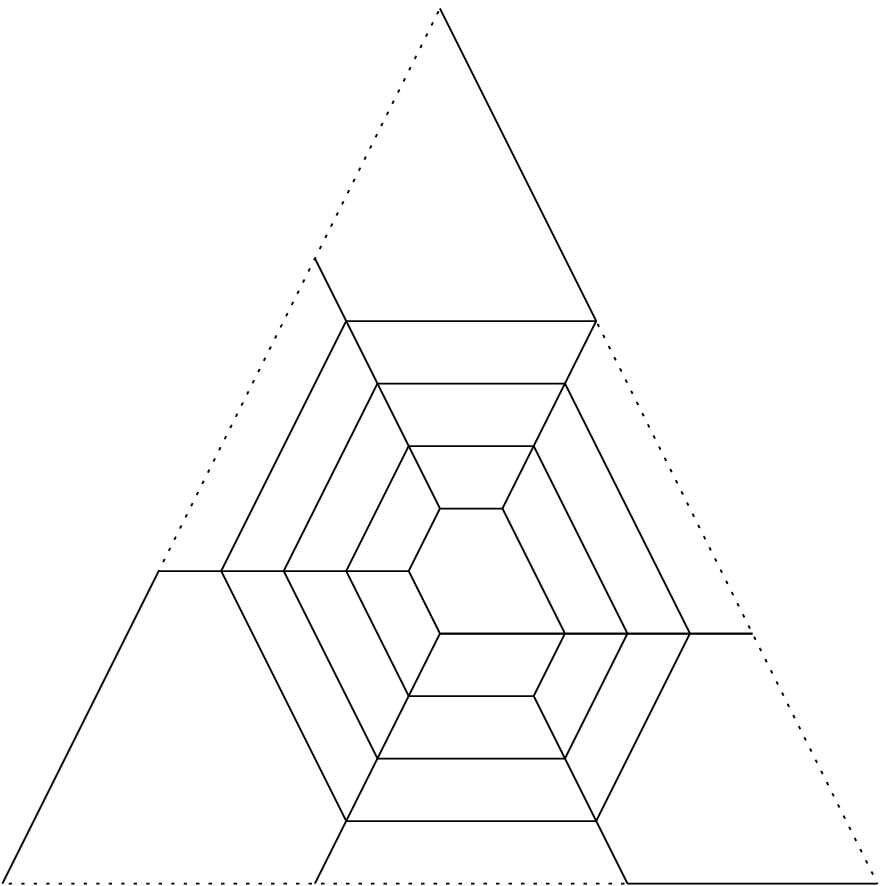}
\par\end{center}

\noindent The next figure represents ${\rm supp}(\mu^{*})\cap\triangle_{11}$.

\noindent \begin{center}
\includegraphics[scale=0.5]{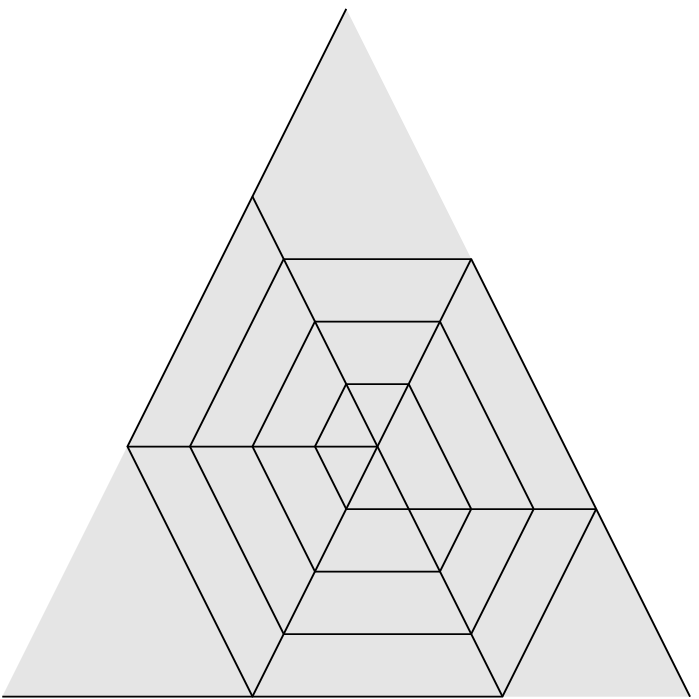}
\par\end{center}

\noindent The measure $\mu^{*}$ is the sum of six extremal measures,
as implied by Theorem \ref{thm:main}. The supports of these measures
are depicted below.

\noindent \begin{center}
\includegraphics[scale=0.5]{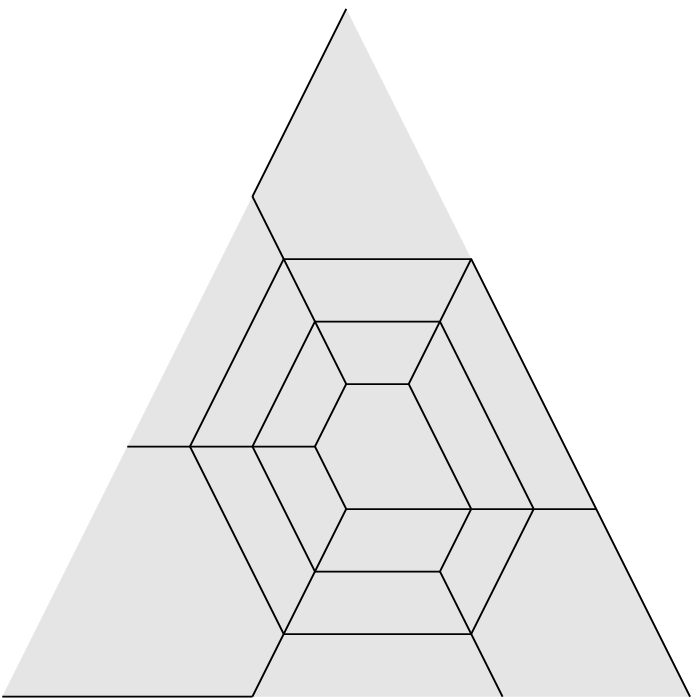}$\quad$\includegraphics[scale=0.5]{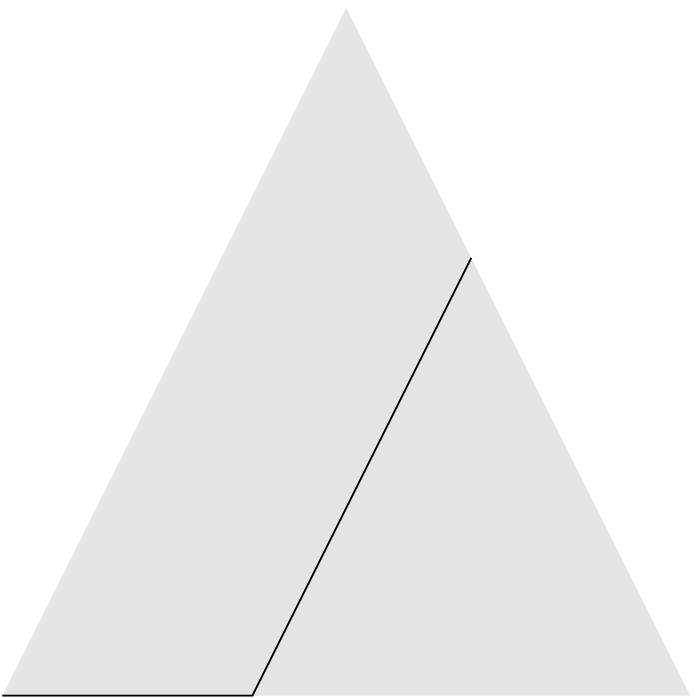}$\quad$\includegraphics[scale=0.5]{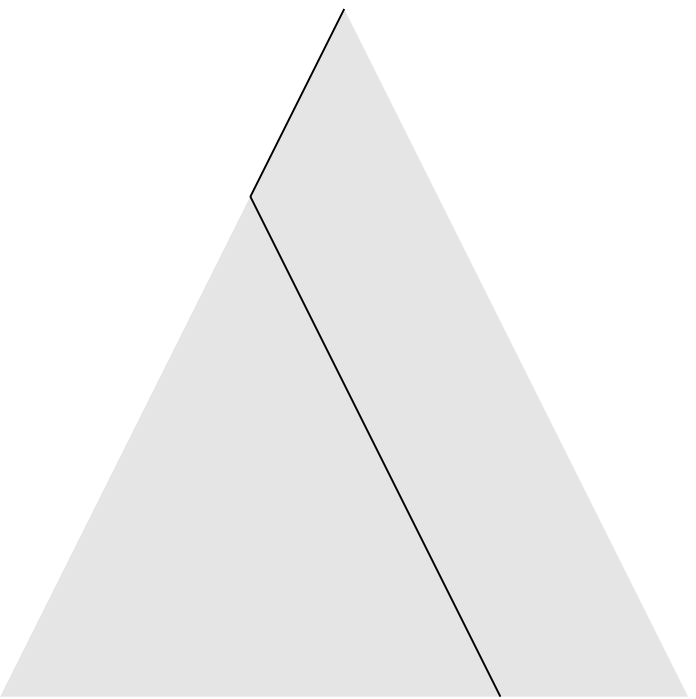}
\par\end{center}

\noindent \begin{center}
\includegraphics[scale=0.5]{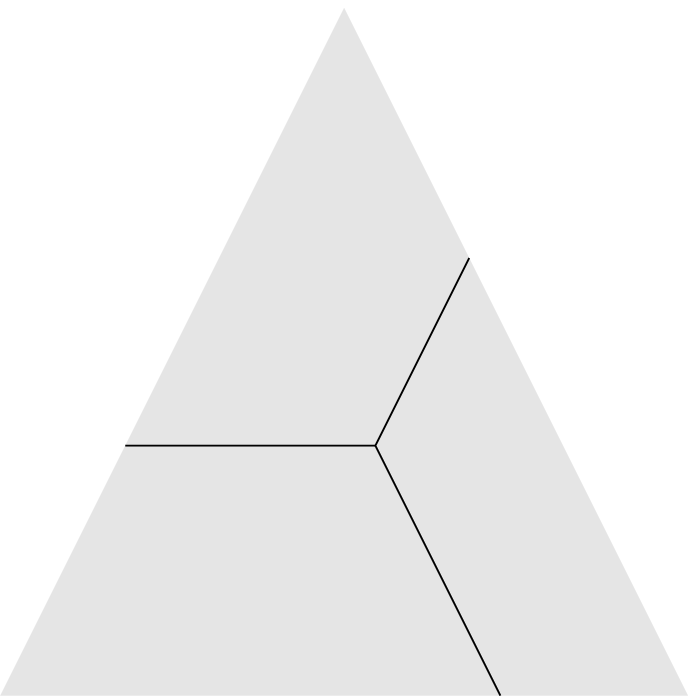}$\quad$\includegraphics[scale=0.5]{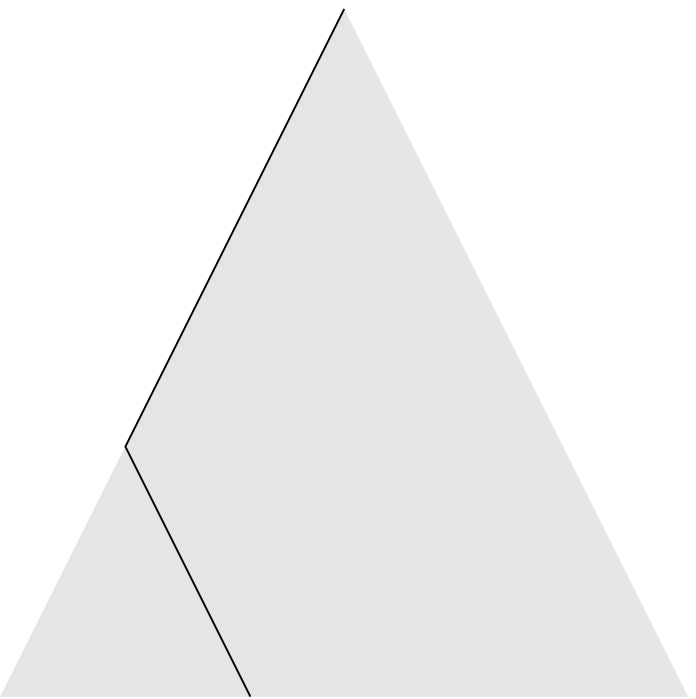}$\quad$\includegraphics[scale=0.5]{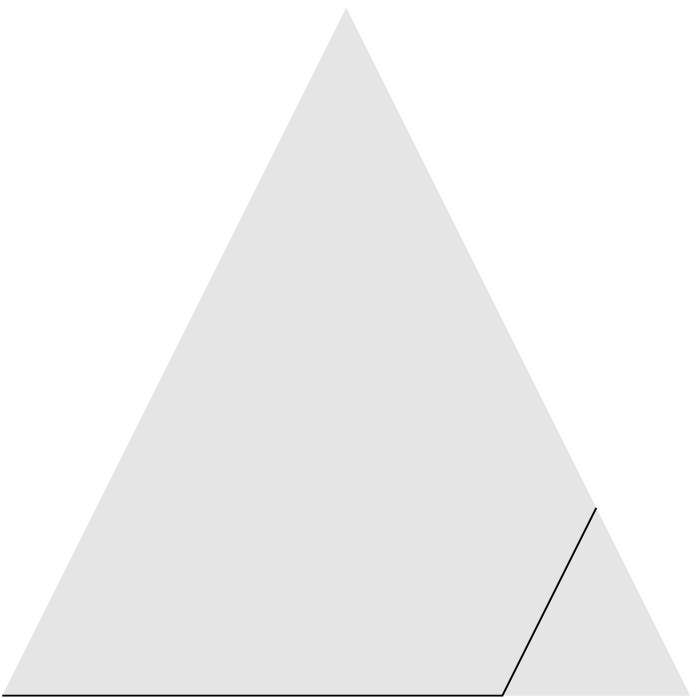}
\par\end{center}

\noindent The first of these measures has weight 9 and the remaining
five have weight 1. The reader familiar with the results of \cite{bcdlt}
will be able to verify that, among these six measures, the first measure
is the only minimal one relative to precedence.

\end{document}